\documentclass[11pt]{article}
\usepackage[margin=.6in]{geometry}
\usepackage{amsmath,amssymb,amsthm,mathtools,bm}
\usepackage{booktabs,array}
\usepackage{graphicx}
\usepackage{microtype}
\usepackage{enumitem}
\usepackage{hyperref}
\usepackage[mathlines]{lineno}
\usepackage{xcolor}
\hypersetup{colorlinks=true,linkcolor=blue,citecolor=blue,urlcolor=blue}
\graphicspath{{figures/}}
\allowdisplaybreaks

\newtheorem{theorem}{Theorem}[section]
\newtheorem{lemma}[theorem]{Lemma}
\newtheorem{proposition}[theorem]{Proposition}
\newtheorem{corollary}[theorem]{Corollary}
\newtheorem{assumption}[theorem]{Assumption}
\theoremstyle{definition}

\newcommand{\R}{\mathbb{R}}
\newcommand{\Z}{\mathbb{Z}}
\newcommand{\cD}{\mathcal{D}}
\newcommand{\cL}{\mathcal{L}}
\newcommand{\cR}{\mathcal{R}}
\newcommand{\diag}{\operatorname{diag}}

\newcommand{\norm}[1]{\left\lVert #1\right\rVert}
\newcommand{\one}{\mathbf{1}}
\newcommand{\U}{\bm U}
\newcommand{\uvec}{\bm u}
\newcommand{\PhiV}{\bm\Phi}
\newcommand{\jj}{\bm j}
\newcommand{\kk}{\bm k}
\newcommand{\qq}{\bm q}
\newcommand{\ee}{\bm e}

\title{Structure-Preserving Numerical Schemes for Two-Stage Local and Nonlocal Dispersal Systems}
\author{Markjoe O. Uba\\\small School of Mathematical and Statistical Sciences, Northern Illinois University, DeKalb, IL 60115, USA}
\date{}

\begin{document}
\maketitle

\begin{abstract}
We construct and analyze structure-preserving numerical schemes for a two-stage local--nonlocal dispersal system. On general bounded connected smooth habitats, we prove fixed-$\delta$ spatial consistency. We introduce a semi-implicit scheme that is uniquely solvable for every $\Delta t>0$, preserves nonnegativity, and exactly retains the sign of the semidiscrete spectral threshold. A negative threshold yields geometric extinction of the numerical solution, while a positive threshold makes the zero state of the numerical scheme linearly unstable. On rectangular habitats, we construct a reflected Cartesian discretization that is asymptotically compatible with the Neumann local limit, with convergence estimates uniform with respect to the ratio between mesh size and interaction scale. Numerical experiments illustrate the structural, threshold, and local-limit behavior.
\end{abstract}

\noindent\textbf{Keywords.} nonlocal dispersal; biological habitat; structure-preserving discretization; asymptotic compatibility; principal spectrum point; nonnegativity preservation; spectral threshold.\\
\textbf{MSC 2020.} 65M06, 65M12, 65M15, 65R20, 35K57, 45K05, 92D25.

\section{Introduction}

Two-stage population models distinguish juveniles from reproductively mature adults and allow the two classes to have different movement, mortality, maturation, and reproduction rates, together with density-dependent losses within and between the two stages.  Consistent with the continuous models in \cite{CCM,CCS,Onyido2023,Onyido2024}, we consider a bounded connected biological habitat $\Omega\subset\R^d$, $d\ge1$, and the two-stage system
\begin{equation}
\label{eq:continuous-delta}
\begin{aligned}
\partial_t u_1
 &=\mu_1\cD_\delta u_1+r u_2-(a+s)u_1-bu_1^2-\tau c u_1u_2,\\
\partial_t u_2
 &=\mu_2\cD_\delta u_2+s u_1-eu_2-fu_2^2-\tau g u_1u_2,
\end{aligned}
\qquad x\in\Omega,\ t>0.
\end{equation}
Here $\mu_1,\mu_2>0$ are dispersal rates and $\tau\ge0$ measures the strength of cross-stage competition.  For $\delta>0$, $\cD_\delta$ denotes nonlocal dispersal within the habitat.  When the local model is considered, $\cD_0$ is the Laplacian with homogeneous Neumann boundary condition.  The local reaction--diffusion and nonlocal two-stage models were studied analytically in \cite{CCM,CCS} and \cite{Onyido2023,Onyido2024}, respectively; in those settings, the sign of the principal spectral quantity of the linearization at the zero solution determines the persistence--extinction threshold.

We use two spatial discretizations.  On a general smooth habitat,
Sections~\ref{sec:framework}--\ref{sec:scheme} use the strictly positive
symmetric kernel class in \eqref{eq:kernel-general}; this yields strictly
positive cell-to-cell interaction coefficients and gives the structural and
threshold results on geometry-conforming meshes. On rectangular habitats,
Section~\ref{sec:cartesian} uses a compactly supported reflected kernel,
positive near the origin, in order to recover the Neumann Laplacian with a
uniform local-limit estimate.  Combined with the semi-implicit time
discretization, these constructions yield structure-preserving schemes that
retain nonnegativity and the semidiscrete persistence--extinction threshold.

\subsection*{Contributions}
The main contributions are as follows.
\begin{enumerate}[label=(C\arabic*),leftmargin=2.8em]
\item On bounded connected smooth habitats, we prove that the discrete nonlocal dispersal operator has strictly positive off-diagonal entries, preserves constants and discrete mass, is self-adjoint and dissipative in the cell-mass inner product, and has nullspace consisting exactly of the constant vectors.
\item For every fixed $\delta>0$, we prove spatial consistency of the habitat-conforming dispersal operator, with an explicit bound in terms of the modulus of continuity of the approximated function.
\item We prove that the semi-implicit Euler scheme is uniquely solvable for every $\Delta t>0$ and preserves both nonnegative population densities and semidiscrete equilibria.
\item For every positive time step, we prove exact preservation of the semidiscrete persistence--extinction threshold.
\item We derive residual-based a posteriori lower and upper bounds for the continuous principal spectrum point from positive approximate eigenfunctions.
\item On rectangular habitats, we prove asymptotic compatibility with the Neumann local limit, second-order convergence of the discrete spectral threshold in mesh size and interaction scale, and finite-time convergence of the nonlinear scheme with first-order accuracy in time and second-order accuracy in mesh size and interaction scale, with constants independent of the ratio $h/\delta$.
\end{enumerate}

The paper is organized as follows.  Section~\ref{sec:framework} formulates the continuous two-stage model and its spectral threshold on a general habitat.  Section~\ref{sec:habitat-discretization} defines the habitat-conforming spatial discretization and proves its structural and fixed-$\delta$ consistency properties.  Section~\ref{sec:scheme} constructs the semidiscrete and semi-implicit two-stage schemes and states the threshold results.  Section~\ref{sec:cartesian} treats rectangular habitats and develops the reflected Cartesian local-limit analysis.  Section~\ref{sec:numerics} presents experiments on a smooth nonrectangular habitat and on a rectangular habitat.  The remaining proofs are given in Section~\ref{sec:proof-main}.

\section{Continuous model on a general habitat and spectral threshold}
\label{sec:framework}

Throughout Sections~\ref{sec:framework}--\ref{sec:scheme}, let $\Omega\subset\R^d$ be a bounded connected domain with $C^2$ boundary.

\subsection{Continuous system, coefficients, and ordering}

Set $X=C(\overline\Omega)$ with $\norm{u}_\infty=\sup_{x\in\overline\Omega}|u(x)|$, and define
\[
 X_+=\{u\in X:u(x)\ge0\ \text{for every }x\in\overline\Omega\},
 \qquad
 X_{++}=\{u\in X_+:\inf_{x\in\overline\Omega}u(x)>0\}.
\]
We use boldface letters for two-stage states $\uvec=(u_1,u_2)\in X\times X$ with
$\norm{\uvec}=\max\{\norm{u_1}_\infty,\norm{u_2}_\infty\}$.

\begin{assumption}[Reaction coefficients]
\label{ass:coeff}
The functions $a,b,c,e,f,g,r,s$ belong to $C^2(\overline\Omega)$ and are nonnegative, $r$ and $s$ are non-identically zero, and $b$ and $f$ are strictly positive on $\overline\Omega$.  The dispersal rates satisfy $\mu_1,\mu_2>0$, and the cross-stage competition parameter satisfies $\tau\ge0$.
\end{assumption}
Write $H=a+s$ and define
\begin{equation}
\label{eq:A-matrix}
 A(x)=
 \begin{pmatrix}
 -H(x)&r(x)\\ s(x)&-e(x)
 \end{pmatrix}.
\end{equation}

\subsection{Nonlocal dispersal on the habitat}

For each fixed $\delta>0$, assume
\begin{equation}
\label{eq:kernel-general}
 \kappa_\delta\in C(\overline\Omega\times\overline\Omega),
 \qquad
 \kappa_\delta(x,y)=\kappa_\delta(y,x)>0
 \quad (x,y\in\overline\Omega),
\end{equation}
and define
\begin{equation}
\label{eq:general-nonlocal}
 (\cD_\delta v)(x)
 =
 \int_\Omega
 \kappa_\delta(x,y)\big(v(y)-v(x)\big)\,dy,
 \qquad v\in X.
\end{equation}
This is the difference-form dispersal operator used in the continuous nonlocal two-stage model; compare \cite{Onyido2023,Onyido2024}.  The parameter $\delta$ indexes the family of nonlocal kernels; no local-limit scaling is imposed on \eqref{eq:kernel-general} in the general-habitat analysis. The strict positivity in \eqref{eq:kernel-general} is the standing kernel hypothesis throughout Sections~\ref{sec:framework}--\ref{sec:scheme}.  The compactly supported reflected family introduced in Section~\ref{sec:cartesian} is a second kernel construction designed specifically for the Neumann local limit.

\begin{lemma}[Continuous dispersal identities]
\label{lem:continuous-dispersal-identities}
For every $\delta>0$ and $v\in X$,
\[
 \cD_\delta 1=0,
 \qquad
 \int_\Omega \cD_\delta v(x)\,dx=0.
\]
Moreover,
\begin{equation}
\label{eq:continuous-energy}
 -\int_\Omega v(x)\cD_\delta v(x)\,dx
 =
 \frac12\int_\Omega\int_\Omega
 \kappa_\delta(x,y)\big(v(y)-v(x)\big)^2\,dy\,dx
 \ge0.
\end{equation}
\end{lemma}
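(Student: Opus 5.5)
The identity $\cD_\delta 1=0$ is immediate from \eqref{eq:general-nonlocal}: with $v\equiv1$ the integrand $\kappa_\delta(x,y)(1-1)$ vanishes identically. All remaining integrals are finite. Indeed, $\kappa_\delta$ is continuous on the compact set $\overline\Omega\times\overline\Omega$, hence bounded by some $K_\delta$, and $v\in X$ is bounded. So $\kappa_\delta(x,y)(v(y)-v(x))$ and $\kappa_\delta(x,y)v(x)(v(y)-v(x))$ are bounded, measurable (in fact continuous) functions on $\Omega\times\Omega$, which has finite measure because $\Omega$ is bounded. Their double integrals therefore converge absolutely, and Fubini's theorem lets us interchange the order of integration and swap variables freely. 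This integrability check is the only point requiring care; the rest is algebra.

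For mass conservation, write
\[
\int_\Omega \cD_\delta v\,dx=\int_\Omega\int_\Omega \kappa_\delta(x,y)v(y)\,dy\,dx-\int_\Omega\int_\Omega \kappa_\delta(x,y)v(x)\,dy\,dx .
\]
Renaming $(x,y)\mapsto(y,x)$ in the first double integral and using the symmetry $\kappa_\delta(x,y)=\kappa_\delta(y,x)$ from \eqref{eq:kernel-general} turns it into the second, so the difference is zero.

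For \eqref{eq:continuous-energy}, set
\[
I=-\int_\Omega v\,\cD_\delta v\,dx=\int_\Omega\int_\Omega \kappa_\delta(x,y)\,v(x)\big(v(x)-v(y)\big)\,dy\,dx .
\]
Exchanging the roles of $x$ and $y$ via Fubini and using symmetry of the kernel gives the equivalent form
\[
I=\int_\Omega\int_\Omega \kappa_\delta(x,y)\,v(y)\big(v(y)-v(x)\big)\,dy\,dx .
\]
Averaging the two expressions gives
\[
I=\frac12\int_\Omega\int_\Omega \kappa_\delta(x,y)\,\big(v(x)-v(y)\big)^2\,dy\,dx ,
\]
which is \eqref{eq:continuous-energy}. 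Nonnegativity follows because $\kappa_\delta>0$ and the squared difference is nonnegative.
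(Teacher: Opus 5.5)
Your proof is correct and follows the same route as the paper: $\cD_\delta 1=0$ directly from the definition, then symmetry of $\kappa_\delta$ with the interchange $x\leftrightarrow y$ for both the mass identity and the symmetrized energy identity. Your explicit boundedness and Fubini justification is a detail the paper leaves implicit, but the argument is otherwise the same.
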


\begin{proof}
The identity $\cD_\delta1=0$ follows directly from \eqref{eq:general-nonlocal}.  Symmetry of $\kappa_\delta$ and interchange of $x$ and $y$ give
\[
 \int_\Omega\int_\Omega
 \kappa_\delta(x,y)\big(v(y)-v(x)\big)\,dy\,dx=0.
\]
Applying the same symmetry after multiplying by $v(x)$ yields \eqref{eq:continuous-energy}.
\end{proof}

When the local model is considered, set
\begin{equation}
\label{eq:general-local}
 \cD_0=\Delta,
 \qquad
 \partial_\nu v=0\quad\text{on }\partial\Omega.
\end{equation}

\subsection{Linearization and persistence threshold}

The linearization of \eqref{eq:continuous-delta} at zero is
\begin{equation}
\label{eq:lin-cont}
 \partial_t\PhiV=\cL_\delta\PhiV,
 \qquad
 \cL_\delta=
 \begin{pmatrix}
 \mu_1\cD_\delta-H&r\\
 s&\mu_2\cD_\delta-e
 \end{pmatrix}.
\end{equation}
For $\delta>0$, define the principal spectrum point of $\cL_\delta$ on $X\times X$ by
\begin{equation}
\label{eq:lambda-cont}
 \lambda_\delta
 :=\sup\{\Re z:z\in\sigma(\cL_\delta)\}.
\end{equation}
The supremum is used here because $\cL_\delta$ acts on the
infinite-dimensional space $X\times X$, so the spectral bound need not
a priori be attained by a spectral value.
For the local operator obtained from \eqref{eq:lin-cont} and \eqref{eq:general-local}, write $\cL_0$ and denote its principal eigenvalue by $\lambda_0$.

Under the standing kernel and coefficient hypotheses of \cite{Onyido2024}, Theorem~2.1 identifies the lower and upper generalized principal eigenvalues with the principal spectrum point.  Hence
\begin{equation}
\label{eq:CW-cont}
 \lambda_\delta
 =\sup_{\PhiV\in X_{++}\times X_{++}}
 \inf_{x\in\overline\Omega,\,i}
 \frac{(\cL_\delta\PhiV)_i(x)}{\Phi_i(x)}
 =\inf_{\PhiV\in X_{++}\times X_{++}}
 \sup_{x\in\overline\Omega,\,i}
 \frac{(\cL_\delta\PhiV)_i(x)}{\Phi_i(x)}.
\end{equation}
For the continuous two-stage population model, $\lambda_\delta\le0$ implies extinction, whereas $\lambda_\delta>0$ implies persistence; see \cite[Theorem~1]{Onyido2023}.  Under the additional assumption $r,s\in X_{++}$, the same result yields persistence uniformly in space.

\section{Habitat-conforming spatial discretization}
\label{sec:habitat-discretization}

We use a conforming cell partition of $\Omega$; general-mesh discretizations
of nonlocal problems are analyzed in \cite{TianDu2013,TianDuDG2015}.  Let $\mathcal T_h=\{K_i\}_{i=1}^{N_h}$ be a conforming partition of $\Omega$ into cells of positive measure such that
\begin{equation}
\label{eq:mesh-partition}
 \overline\Omega
 =\bigcup_{i=1}^{N_h}\overline{K_i},
 \qquad
 K_i^\circ\cap K_j^\circ=\varnothing
 \quad(i\ne j),
 \qquad
 \max_{1\le i\le N_h}\operatorname{diam}(K_i)\le h.
\end{equation}
Set
\[
 m_i=|K_i|,\qquad
 M_h=\diag(m_1,\ldots,m_{N_h}),\qquad
 \mathbf m_h=(m_1,\ldots,m_{N_h})^T,
\]
and define the cell-average projection
\begin{equation}
\label{eq:cell-average}
 (\mathcal P_hv)_i
 =\frac1{m_i}\int_{K_i}v(x)\,dx.
\end{equation}

For $i\ne j$, define the cell interaction and the corresponding rate by
\begin{equation}
\label{eq:cell-interactions}
 \alpha_{ij}^{\delta,h}
 =
 \int_{K_i}\int_{K_j}\kappa_\delta(x,y)\,dy\,dx,
 \qquad
 w_{ij}^{\delta,h}
 =\frac{\alpha_{ij}^{\delta,h}}{m_i}.
\end{equation}
The habitat-conforming discrete dispersal operator is
\begin{equation}
\label{eq:general-Dh}
 (D_{\delta,h}V)_i
 =
 \sum_{j\ne i}
 w_{ij}^{\delta,h}(V_j-V_i),
 \qquad i=1,\ldots,N_h.
\end{equation}

\begin{proposition}[Structural properties of the habitat-conforming operator]
\label{prop:habitat-structure}
For every $\delta>0$ and every partition \eqref{eq:mesh-partition}, the operator $D_{\delta,h}$ satisfies:
\begin{enumerate}[label=(\roman*)]
\item $(D_{\delta,h})_{ij}>0$ for $i\ne j$, and $D_{\delta,h}\one=0$;
\item
\begin{equation}
\label{eq:discrete-mass}
 \mathbf m_h^TD_{\delta,h}=0;
\end{equation}
\item
\begin{equation}
\label{eq:weighted-symmetry}
 M_hD_{\delta,h}=D_{\delta,h}^TM_h;
\end{equation}
\item for every $V\in\R^{N_h}$,
\begin{equation}
\label{eq:habitat-energy}
 -V^TM_hD_{\delta,h}V
 =
 \frac12\sum_{\substack{i,j=1\\ i\ne j}}^{N_h}
 \alpha_{ij}^{\delta,h}(V_i-V_j)^2
 \ge0;
\end{equation}
\item $\ker D_{\delta,h}=\operatorname{span}\{\one\}$.
\end{enumerate}
Consequently, the dispersal equation $V' = D_{\delta,h}V$ preserves the discrete total mass $\mathbf m_h^TV$.
\end{proposition}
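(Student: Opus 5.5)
The whole proof rests on one observation: the matrix $S:=M_hD_{\delta,h}$ has the explicit entries
\[
S_{ij}=\alpha_{ij}^{\delta,h}\ (i\ne j),\qquad S_{ii}=-\sum_{j\ne i}\alpha_{ij}^{\delta,h},
\]
because $m_i w_{ij}^{\delta,h}=\alpha_{ij}^{\delta,h}$. Every item then reduces to positivity and symmetry of $\alpha_{ij}^{\delta,h}$.

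\textbf{Properties of the interaction coefficients.} By \eqref{eq:kernel-general}, $\kappa_\delta$ is continuous and strictly positive on the compact set $\overline\Omega\times\overline\Omega$. It therefore has a positive minimum $\underline\kappa>0$. Hence
\[
\alpha_{ij}^{\delta,h}\ge \underline\kappa\, m_im_j>0 .
\]
Fubini and $\kappa_\delta(x,y)=\kappa_\delta(y,x)$ give $\alpha_{ij}^{\delta,h}=\alpha_{ji}^{\delta,h}$, so $S$ is symmetric.

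\textbf{Items (i)--(iii).} For (i), the off-diagonal entries of $D_{\delta,h}$ are $w_{ij}^{\delta,h}=\alpha_{ij}^{\delta,h}/m_i>0$. The identity $D_{\delta,h}\one=0$ holds because every term $V_j-V_i$ vanishes when $V=\one$. Item (iii) is exactly the symmetry of $S$: $M_hD_{\delta,h}=S=S^T=D_{\delta,h}^TM_h$. Item (ii) follows from (iii) and (i):
\[
\mathbf m_h^TD_{\delta,h}=\one^TM_hD_{\delta,h}=\one^TD_{\delta,h}^TM_h=(D_{\delta,h}\one)^TM_h=0.
\]

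\textbf{Item (iv).} This is the discrete analogue of Lemma~\ref{lem:continuous-dispersal-identities}. Write
\[
-V^TSV=-\sum_i\sum_{j\ne i}\alpha_{ij}^{\delta,h}V_i(V_j-V_i).
\]
Swapping the indices $i\leftrightarrow j$ and using symmetry of $\alpha$ gives the same sum with the roles of $V_i$ and $V_j$ exchanged. Averaging the two expressions yields
\[
\tfrac12\sum_{i\ne j}\alpha_{ij}^{\delta,h}(V_j-V_i)(V_j-V_i),
\]
which is \eqref{eq:habitat-energy}. It is nonnegative since every $\alpha_{ij}^{\delta,h}>0$.

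\textbf{Item (v).} The inclusion $\operatorname{span}\{\one\}\subset\ker D_{\delta,h}$ is part of (i). Conversely, suppose $D_{\delta,h}V=0$. Then $V^TM_hD_{\delta,h}V=0$, so by (iv) the sum $\sum_{i\ne j}\alpha_{ij}^{\delta,h}(V_i-V_j)^2$ vanishes. Strict positivity of each $\alpha_{ij}^{\delta,h}$ then forces $V_i=V_j$ for all pairs $i\ne j$.

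This is the only step that uses more than algebra. It needs the kernel's strict positivity and compactness of $\overline\Omega\times\overline\Omega$, so that every pair of cells interacts; no graph-connectivity argument is required.

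\textbf{Mass conservation.} If $V'=D_{\delta,h}V$, then by (ii)
\[
\frac{d}{dt}\,\mathbf m_h^TV=\mathbf m_h^TD_{\delta,h}V=0.
\]

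There is no real obstacle in this proof. The only points needing care are the Fubini/symmetry step behind $\alpha_{ij}^{\delta,h}=\alpha_{ji}^{\delta,h}$ and the use of positivity in (v).
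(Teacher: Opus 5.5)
Your proof is correct and follows essentially the same route as the paper: symmetry of $\kappa_\delta$ gives $\alpha_{ij}^{\delta,h}=\alpha_{ji}^{\delta,h}$ and hence $M_hD_{\delta,h}=D_{\delta,h}^TM_h$, which together with $D_{\delta,h}\one=0$ yields the mass identity; pairing the $(i,j)$ and $(j,i)$ terms gives the energy identity; and strict positivity of every $\alpha_{ij}^{\delta,h}$ forces $\ker D_{\delta,h}=\operatorname{span}\{\one\}$. The only difference is your compactness bound $\alpha_{ij}^{\delta,h}\ge\underline\kappa\,m_im_j$, which is valid but not needed, since a strictly positive integrand over a set of positive measure already gives $\alpha_{ij}^{\delta,h}>0$, and that is how the paper argues.
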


\begin{proof}
The identities follow directly from the symmetric interaction coefficients.
Symmetry of $\kappa_\delta$ gives
\[
 \alpha_{ij}^{\delta,h}=\alpha_{ji}^{\delta,h},\qquad
 m_iw_{ij}^{\delta,h}=m_jw_{ji}^{\delta,h},\qquad
 M_hD_{\delta,h}=D_{\delta,h}^TM_h.
\]
Together with $D_{\delta,h}\one=0$, this yields $\mathbf m_h^TD_{\delta,h}=0$.
Pairing the $(i,j)$ and $(j,i)$ terms gives
\[
 -V^TM_hD_{\delta,h}V
 =
 \frac12\sum_{\substack{i,j=1\\ i\ne j}}^{N_h}
 \alpha_{ij}^{\delta,h}(V_i-V_j)^2
 \ge0.
\]
If $D_{\delta,h}V=0$, then the left-hand side vanishes.  Since $\kappa_\delta>0$ on $\overline\Omega\times\overline\Omega$ and each cell has positive measure, $\alpha_{ij}^{\delta,h}>0$ for $i\ne j$.  Therefore
\[
 \sum_{\substack{i,j=1\\ i\ne j}}^{N_h}
 \alpha_{ij}^{\delta,h}(V_i-V_j)^2=0
 \quad\Longrightarrow\quad
 V_i=V_j\quad\text{for all }i,j,
\]
so $\ker D_{\delta,h}=\operatorname{span}\{\one\}$.  Finally,
$\frac{d}{dt}(\mathbf m_h^TV)=\mathbf m_h^TD_{\delta,h}V=0$ along $V'=D_{\delta,h}V$.
\end{proof}

For $v\in C(\overline\Omega)$, define its modulus of continuity and the kernel bound by
\[
 \omega_v(\rho)=\sup\{|v(x)-v(y)|:x,y\in\overline\Omega,\ |x-y|\le\rho\},
 \qquad
 \Gamma_\delta=\sup_{x\in\overline\Omega}\int_\Omega\kappa_\delta(x,y)\,dy<\infty.
\]

\begin{proposition}[Consistency for fixed $\delta$]
\label{prop:general-fixed-delta-consistency}
For every fixed $\delta>0$ and every $v\in C(\overline\Omega)$,
\begin{equation}
\label{eq:general-fixed-scale}
 \norm{D_{\delta,h}\mathcal P_hv
       -\mathcal P_h\cD_\delta v}_\infty
 \le 2\Gamma_\delta\,\omega_v(h).
\end{equation}
In particular,
\[
 \norm{D_{\delta,h}\mathcal P_hv
       -\mathcal P_h\cD_\delta v}_\infty
 \longrightarrow0
 \qquad(h\to0).
\]
If $v$ is Lipschitz continuous, the right-hand side of \eqref{eq:general-fixed-scale} is $O(h)$.
\end{proposition}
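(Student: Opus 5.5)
The plan is to write both quantities in the $i$-th component as double integrals over $K_i\times\Omega$ and compare them pointwise. Write $V=\mathcal P_hv$, so $V_j=\frac1{m_j}\int_{K_j}v(z)\,dz$. By \eqref{eq:cell-interactions} and \eqref{eq:general-Dh},
\[
 (D_{\delta,h}V)_i=\frac1{m_i}\sum_{j\ne i}\int_{K_i}\int_{K_j}\kappa_\delta(x,y)\big(V_j-V_i\big)\,dy\,dx .
\]
Since $V_i-V_i=0$, the term $j=i$ can be added for free. Because the cells partition $\Omega$ up to null sets, this becomes
\[
 \frac1{m_i}\int_{K_i}\int_\Omega\kappa_\delta(x,y)\big(\tilde V(y)-\tilde V(x)\big)\,dy\,dx ,
\]
where $\tilde V=\sum_jV_j\one_{K_j}$ is the piecewise-constant reconstruction. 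On the other side, by \eqref{eq:general-nonlocal} and \eqref{eq:cell-average},
\[
 (\mathcal P_h\cD_\delta v)_i=\frac1{m_i}\int_{K_i}\int_\Omega\kappa_\delta(x,y)\big(v(y)-v(x)\big)\,dy\,dx .
\]
Subtracting, the difference is
\[
 \frac1{m_i}\int_{K_i}\int_\Omega\kappa_\delta(x,y)\Big[\big(\tilde V(y)-v(y)\big)-\big(\tilde V(x)-v(x)\big)\Big]\,dy\,dx .
\]

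Next, bound $\tilde V-v$ pointwise. For $y\in K_j$, $|V_j-v(y)|\le\frac1{m_j}\int_{K_j}|v(z)-v(y)|\,dz\le\omega_v(h)$, since $\operatorname{diam}K_j\le h$ by \eqref{eq:mesh-partition}. Hence $\|\tilde V-v\|_{L^\infty(\Omega)}\le\omega_v(h)$ almost everywhere. Then each of the two bracketed terms contributes at most
\[
 \frac1{m_i}\int_{K_i}\Big(\int_\Omega\kappa_\delta(x,y)\,dy\Big)\omega_v(h)\,dx\le\Gamma_\delta\,\omega_v(h),
\]
using $\kappa_\delta>0$ and the definition of $\Gamma_\delta$. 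This gives \eqref{eq:general-fixed-scale} with the constant $2$. Note $\Gamma_\delta<\infty$ because $\kappa_\delta$ is continuous on the compact set $\overline\Omega\times\overline\Omega$.

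The convergence statement follows because $v$ is uniformly continuous on the compact set $\overline\Omega$, so $\omega_v(h)\to0$. If $v$ is Lipschitz with constant $L$, then $\omega_v(h)\le Lh$, which gives the $O(h)$ bound.

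The only delicate point is the bookkeeping at the start: inserting the diagonal $j=i$ term and recognizing the sum over $j$ as an integral over $\Omega$. This is where the partition property (null-measure overlaps, full cover of $\Omega$) is used. Everything after that is routine. In particular, no kernel regularity beyond boundedness of $\int_\Omega\kappa_\delta(x,\cdot)\,dy$ is needed, which is why the estimate is uniform in the mesh geometry.
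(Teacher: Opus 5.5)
Your proposal is correct and follows the paper's proof: you add the vanishing $j=i$ term, write both sides as integrals over $K_i\times\Omega$, bound $|\bar v_j-v(y)|$ and $|\bar v_i-v(x)|$ by $\omega_v(h)$, and then integrate the kernel to get $2\Gamma_\delta\,\omega_v(h)$. Your piecewise-constant reconstruction $\tilde V$ is only a notational repackaging of the paper's cell-by-cell comparison.
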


\begin{proof}
Write $\bar v_i=(\mathcal P_hv)_i$.  From \eqref{eq:general-Dh} and \eqref{eq:cell-interactions},
\[
 (D_{\delta,h}\mathcal P_hv)_i
 =
 \frac1{m_i}
 \sum_{j=1}^{N_h}
 \int_{K_i}\int_{K_j}
 \kappa_\delta(x,y)(\bar v_j-\bar v_i)\,dy\,dx,
\]
where the $j=i$ term is zero.  On the other hand,
\[
 (\mathcal P_h\cD_\delta v)_i
 =
 \frac1{m_i}
 \sum_{j=1}^{N_h}
 \int_{K_i}\int_{K_j}
 \kappa_\delta(x,y)(v(y)-v(x))\,dy\,dx.
\]
For $x\in K_i$ and $y\in K_j$, $|\bar v_i-v(x)|\le\omega_v(h)$ and $|\bar v_j-v(y)|\le\omega_v(h)$.  Therefore
\[
 \left|
 (D_{\delta,h}\mathcal P_hv)_i
 -(\mathcal P_h\cD_\delta v)_i
 \right|
 \le
 2\omega_v(h)\,
 \frac1{m_i}
 \int_{K_i}\int_\Omega\kappa_\delta(x,y)\,dy\,dx
 \le2\Gamma_\delta\omega_v(h),
\]
which proves the estimate.
\end{proof}

\section{Semi-implicit two-stage scheme and threshold properties}
\label{sec:scheme}

For $q\in\{H,e,r,s,b,c,f,g\}$, write $q_i^h=(\mathcal P_hq)_i$.  We use
$H_h,E_h,R_h,S_h,\mathsf B_h,C_h,F_h,G_h$ for the corresponding diagonal
matrices of cell averages, reserving $B_h$ for the interstage coupling
matrix defined below.  In particular,
\[
\begin{aligned}
 H_h&=\diag(H_i^h),& E_h&=\diag(e_i^h),&
 R_h&=\diag(r_i^h),& S_h&=\diag(s_i^h),\\
 \mathsf B_h&=\diag(b_i^h),& C_h&=\diag(c_i^h),&
 F_h&=\diag(f_i^h),& G_h&=\diag(g_i^h).
\end{aligned}
\]

Define
\begin{equation}
\label{eq:Lh}
 L_{\delta,h}=A_{\delta,h}+B_h,
\end{equation}
where
\begin{equation}
\label{eq:split}
 A_{\delta,h}
 =\diag(\mu_1D_{\delta,h}-H_h,
         \mu_2D_{\delta,h}-E_h),
 \qquad
 B_h=
 \begin{pmatrix}
 0&R_h\\
 S_h&0
 \end{pmatrix}.
\end{equation}
By Proposition~\ref{prop:habitat-structure}, the off-diagonal coefficients
of the linear system generated by $L_{\delta,h}$ are nonnegative.  Moreover,
since $r,s\ge0$ and $r,s\not\equiv0$,
\[
 \sum_{i=1}^{N_h}m_i r_i^h
 =
 \int_\Omega r(x)\,dx>0,
 \qquad
 \sum_{i=1}^{N_h}m_i s_i^h
 =
 \int_\Omega s(x)\,dx>0,
\]
and therefore $R_h\ne0$ and $S_h\ne0$.

\begin{lemma}[Discrete maximum principle]
\label{lem:discrete-maximum}
Let $\mu>0$, let $Q=\diag(q_1,\ldots,q_{N_h})$ with $q_i\ge0$, and set $\mathcal A_h=\mu D_{\delta,h}-Q$.  For every $\Delta t>0$ and $F\ge0$, the equation $(I-\Delta t\mathcal A_h)V=F$ has a unique solution satisfying $V\ge0$; if $F\ne0$, then $V\gg0$.
\end{lemma}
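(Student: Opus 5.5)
The approach is to show that $I-\Delta t\mathcal A_h$ is a nonsingular M-matrix, and then to upgrade nonnegativity to strict positivity using irreducibility.

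\emph{Sign structure.} Write
\[
 I-\Delta t\mathcal A_h = I+\Delta t Q - \Delta t\mu D_{\delta,h}.
\]
By Proposition~\ref{prop:habitat-structure}(i), its off-diagonal entries are $-\Delta t\mu\, w_{ij}^{\delta,h}<0$. Because $D_{\delta,h}\one=0$, the $i$th diagonal entry is
\[
 1+\Delta t q_i+\Delta t\mu\sum_{j\ne i}w_{ij}^{\delta,h}.
\]
Each row therefore satisfies
\[
 \text{diagonal} - \sum_{j\ne i}|\text{off-diagonal}| = 1+\Delta t q_i\ge1>0,
\]
so the matrix is strictly row diagonally dominant with positive diagonal and nonpositive off-diagonal entries. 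This immediately gives invertibility, hence unique solvability for every $\Delta t>0$.

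\emph{Nonnegativity.} I would argue directly by a minimum principle rather than cite M-matrix theory. Let $V$ solve the system and pick an index $k$ with $V_k=\min_i V_i$. The $k$th equation reads
\[
 (1+\Delta t q_k)V_k-\Delta t\mu\sum_{j\ne k}w_{kj}^{\delta,h}(V_j-V_k)=F_k\ge0.
\]
Every difference $V_j-V_k$ is nonnegative, so the sum term is subtracted with a nonpositive contribution. Hence $(1+\Delta t q_k)V_k\ge F_k\ge0$, which gives $V_k\ge0$ and therefore $V\ge0$.

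\emph{Strict positivity.} Suppose $F\ne0$ and, for contradiction, that $V_k=0$ for some $k$. The $k$th equation then becomes
\[
 -\Delta t\mu\sum_{j\ne k}w_{kj}^{\delta,h}V_j=F_k\ge0.
\]
Since every $w_{kj}^{\delta,h}>0$ by Proposition~\ref{prop:habitat-structure}(i) and every $V_j\ge0$, the left side is $\le0$. It follows that all $V_j=0$ and $F_k=0$. But then $V=0$, so $F=(I-\Delta t\mathcal A_h)V=0$, contradicting $F\ne0$. Thus $V\gg0$.

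The only real subtlety is this last step, and it is exactly where strict positivity of all off-diagonal entries enters: a single zero component forces the whole vector to vanish. For a kernel with merely irreducible coupling one would instead propagate zeros along a path in the interaction graph, but here the full positivity of $\kappa_\delta$ makes that propagation immediate.
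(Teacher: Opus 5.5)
Your proof is correct and follows the paper's argument. The minimum-value argument gives $V\ge0$, and because every $w_{kj}^{\delta,h}>0$, a single vanishing component forces $V\equiv0$ and hence $F=0$. The only difference is invertibility: you get it from strict row diagonal dominance (row excess $1+\Delta t\,q_i\ge1$), whereas the paper applies the minimum principle to $\pm V$ with $F=0$ to get a trivial kernel and then uses rank--nullity; either is immediate.
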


\begin{corollary}[Strong positivity of the linearized evolution]
\label{cor:strong-positive-linear}
For $\mathcal A_h$ as in Lemma~\ref{lem:discrete-maximum},
\[
 V(0)\ge0,\quad V(0)\ne0
 \quad\Longrightarrow\quad
 e^{t\mathcal A_h}V(0)\gg0
 \qquad(t>0).
\]
Moreover,
\begin{equation}
\label{eq:strong-positive-linear}
 \U^0\ge0,\quad \U^0\ne0
 \quad\Longrightarrow\quad
 e^{tL_{\delta,h}}\U^0\gg0
 \qquad(t>0).
\end{equation}
\end{corollary}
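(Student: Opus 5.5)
The plan is to show that both generators are essentially nonnegative and irreducible. The corollary then follows from the standard fact that the exponential of an irreducible Metzler matrix is entrywise positive. I would argue it directly so that no outside reference is needed.

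For the first claim, write $\mathcal A_h=\mu D_{\delta,h}-Q$. By Proposition~\ref{prop:habitat-structure}(i), its off-diagonal entries are $\mu w_{ij}^{\delta,h}>0$. Choose a constant $\sigma>0$ with $\sigma\ge\max_i|(\mathcal A_h)_{ii}|$. Then $P:=\mathcal A_h+\sigma I$ is entrywise nonnegative, and its off-diagonal part is strictly positive. Since $\sigma I$ commutes with $P$,
\[
 e^{t\mathcal A_h}=e^{-\sigma t}e^{tP}=e^{-\sigma t}\sum_{k\ge0}\frac{t^kP^k}{k!}.
\]
Every term of the series is nonnegative. The $k=1$ term $tP$ has strictly positive off-diagonal entries, and the $k=0$ term is $I$. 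Hence $e^{tP}$ has all entries strictly positive for $t>0$. It follows that $e^{t\mathcal A_h}V(0)\gg0$ whenever $V(0)\ge0$ and $V(0)\ne0$. As an alternative route, one could write $e^{t\mathcal A_h}=\lim_n (I-\tfrac tn\mathcal A_h)^{-n}$ and apply Lemma~\ref{lem:discrete-maximum} to get nonnegativity. Strict positivity, however, is cleaner via the series.

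For \eqref{eq:strong-positive-linear}, apply the same shift to $L_{\delta,h}$ with a large $\sigma$, and set $\mathcal P:=L_{\delta,h}+\sigma I\ge0$. Nonnegativity of $\mathcal P$ uses $r_i^h,s_i^h\ge0$ for the blocks $R_h$ and $S_h$. Within each diagonal block all off-diagonal entries are positive, so $\mathcal P^k\ge0$ and $e^{t\mathcal P}\ge0$. It remains to show that every entry of $e^{t\mathcal P}$ is positive. Equivalently, the directed graph of $\mathcal P$ on the $2N_h$ nodes must be strongly connected. Within stage~1, every node links to every other node, and the same holds within stage~2. Since $R_h\ne0$, there is an index $i_0$ with $r_{i_0}^h>0$, which gives an edge between stage-1 node $i_0$ and stage-2 node $i_0$. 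Likewise $S_h\ne0$ gives an index $j_0$ with an edge in the opposite direction. Combining these with the complete intra-stage links, any node reaches any other within at most four steps.

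Therefore $\sum_{k=0}^{4}\mathcal P^k$ has all entries positive. Since all terms of the exponential series are nonnegative, $e^{t\mathcal P}\ge\sum_{k\le4}t^k\mathcal P^k/k!\gg0$ for $t>0$. Multiplying by $e^{-\sigma t}>0$ gives $e^{tL_{\delta,h}}\gg0$ entrywise, and hence $e^{tL_{\delta,h}}\U^0\gg0$ for $\U^0\ge0$, $\U^0\ne0$.

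The main obstacle is the irreducibility step for the coupled system. It requires noting that $R_h$ and $S_h$ are merely nonzero, not positive. The path argument must therefore route through the single cells $i_0$ and $j_0$ where the coupling is active. This works because the strictly positive kernel makes each stage's graph complete.
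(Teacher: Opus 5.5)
Your argument is correct, but it takes a different route from the paper. The paper argues dynamically. It writes each component of $V'=\mathcal A_hV$ in variation-of-constants form and reads off $V_k(t)\ge e^{-a_kt}V_k(0)>0$ at an initially positive cell. It then gets $V_i(t)\ge \mu w_{ik}^{\delta,h}\int_0^t e^{-a_i(t-s)}V_k(s)\,ds>0$ at every other cell. For the coupled system it says that positivity first fills the nonzero stage and is then passed to the other stage through the nonzero forcing $R_h$ or $S_h$.

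You instead shift to a nonnegative matrix and bound the exponential series below by its first few terms. You then reduce the coupled case to strong connectivity of the directed graph of $L_{\delta,h}+\sigma I$, routing through cells $i_0$, $j_0$ with $r^h_{i_0}>0$ and $s^h_{j_0}>0$. Three steps already suffice, so your bound of four is merely generous.

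Your version is more self-contained:
\begin{itemize}
\item It yields entrywise positivity of the matrix $e^{tL_{\delta,h}}$ directly, which is the form the subsequent Perron--Frobenius step uses.
\item It gets nonnegativity from the series for free. The paper's ``hence $V(t)\ge0$'' from the Duhamel formula tacitly needs a Picard-iteration or first-exit argument, because the $V_j(s)$ under the integral are not a priori nonnegative.
\item It uses only irreducibility, so it transfers verbatim to the reflected Cartesian operator, where only nearest-neighbor weights are guaranteed positive.
\end{itemize}
The paper's componentwise argument, in turn, extends without change to linear systems with time-dependent nonnegative diagonal terms. That is exactly how it is reused in Proposition~\ref{prop:positive-threshold}(ii) for $Z=\widetilde{\U}-\U$. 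Your constant-generator series would need an extra step to cover that case, for instance a diagonal change of variables followed by a Peano--Baker series.
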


Since $L_{\delta,h}\in\R^{2N_h\times2N_h}$, its spectrum is finite.
Hence its spectral bound is attained, and we define
\begin{equation}
\label{eq:lambda-h}
 \lambda_{\delta,h}
 :=\max\{\Re z:z\in\sigma(L_{\delta,h})\}.
\end{equation}
Thus $\max$ is used in the discrete problem, in contrast with the
$\sup$ in \eqref{eq:lambda-cont}.

Fix $t>0$ and set $T_h(t)=e^{tL_{\delta,h}}$.
By Corollary~\ref{cor:strong-positive-linear}, $T_h(t)$ is strongly
positive. The Perron--Frobenius theorem therefore gives a strictly positive eigenvector $\PhiV_h\gg0$ associated with the spectral radius $\rho(T_h(t))$, and this eigenvalue is simple; see, for example, \cite{Smith1995}. Since $T_h(t)=e^{tL_{\delta,h}}$ commutes with $L_{\delta,h}$,
\[
 T_h(t)L_{\delta,h}\PhiV_h
 =
 L_{\delta,h}T_h(t)\PhiV_h
 =
 \rho(T_h(t))L_{\delta,h}\PhiV_h.
\]
Thus $L_{\delta,h}\PhiV_h$ belongs to the eigenspace of $T_h(t)$
associated with $\rho(T_h(t))$.  Since this eigenspace is
one-dimensional,
\[
 L_{\delta,h}\PhiV_h=\mu_h\PhiV_h
\]
for some $\mu_h\in\R$.  By the finite-dimensional spectral mapping
theorem,
\[
 \rho(T_h(t))
 =
 e^{t\lambda_{\delta,h}}.
\]
On the other hand,
\[
 T_h(t)\PhiV_h
 =
 e^{t\mu_h}\PhiV_h
 =
 \rho(T_h(t))\PhiV_h,
\]
and hence $\mu_h=\lambda_{\delta,h}$.  Consequently,
\begin{equation}
\label{eq:discrete-principal-pair}
 L_{\delta,h}\PhiV_h
 =\lambda_{\delta,h}\PhiV_h,
 \qquad
 \PhiV_h\gg0,
\end{equation}
and $\lambda_{\delta,h}$ is a simple real eigenvalue.

For $\U=(U_1,U_2)^T\in\R_+^{2N_h}$, define
\begin{equation}
\label{eq:Q}
 Q_h(\U)
 =
 \diag\big(\mathsf B_hU_1+\tau C_hU_2,
            F_hU_2+\tau G_hU_1\big).
\end{equation}
The spatially discrete system is
\begin{equation}
\label{eq:semidiscrete}
 \U'(t)
 =
 \big(A_{\delta,h}+B_h-Q_h(\U(t))\big)\U(t).
\end{equation}
We use the semi-implicit Euler step
\begin{equation}
\label{eq:scheme}
 \big[I-\Delta t\big(A_{\delta,h}-Q_h(\U^n)\big)\big]\U^{n+1}
 =
 \big(I+\Delta t B_h\big)\U^n.
\end{equation}
Linearization at the zero population state gives
\begin{equation}
\label{eq:linearized-step}
 \U^{n+1}
 =
 G_{\delta,h}(\Delta t)\U^n,
 \qquad
 G_{\delta,h}(\Delta t)
 =
 \big(I-\Delta t A_{\delta,h}\big)^{-1}
 \big(I+\Delta t B_h\big).
\end{equation}

\subsection{Nonnegativity, fixed points, and the discrete threshold}

\begin{theorem}[Unconditional preservation of nonnegativity]
\label{thm:nonnegativity}
Under Assumption~\ref{ass:coeff}, for every $\Delta t>0$ and every $\U^n\ge0$, equation \eqref{eq:scheme} has a unique solution and $\U^{n+1}\ge0$.  If $\U^n\ne0$, then $U_1^{n+2}\gg0$ and $U_2^{n+2}\gg0$.  If $U_1^n\ne0$ and $U_2^n\ne0$, then already $U_1^{n+1}\gg0$ and $U_2^{n+1}\gg0$.
\end{theorem}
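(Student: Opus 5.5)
The plan is to observe that the left-hand operator in \eqref{eq:scheme} is block diagonal: for $\U^n\ge0$ it decouples into two scalar systems to which Lemma~\ref{lem:discrete-maximum} applies directly. Indeed, $A_{\delta,h}-Q_h(\U^n)=\diag(\mu_1D_{\delta,h}-Q_1,\ \mu_2D_{\delta,h}-Q_2)$ with
\[
 Q_1=H_h+\diag(\mathsf B_hU_1^n+\tau C_hU_2^n),\qquad
 Q_2=E_h+\diag(F_hU_2^n+\tau G_hU_1^n).
\]
Because $H=a+s\ge0$, $e\ge0$ and $b,c,f,g\ge0$, $\tau\ge0$, $\U^n\ge0$, all cell averages involved are nonnegative, so both $Q_1$ and $Q_2$ are nonnegative diagonal matrices. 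The right-hand side is
\[
 (I+\Delta tB_h)\U^n=\bigl(U_1^n+\Delta tR_hU_2^n,\ U_2^n+\Delta tS_hU_1^n\bigr)^T,
\]
which is nonnegative since $r_i^h,s_i^h\ge0$. Applying Lemma~\ref{lem:discrete-maximum} to each block then gives unique solvability for every $\Delta t>0$ and $\U^{n+1}\ge0$. The lemma also yields $U_k^{n+1}\gg0$ as soon as the $k$-th right-hand side $F_k$ is nonzero.

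This settles the third assertion immediately: if $U_1^n\ne0$ and $U_2^n\ne0$, then $F_1\ge U_1^n$ and $F_2\ge U_2^n$, so both right-hand sides are nonzero and both components of $\U^{n+1}$ are strictly positive.

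For the second assertion, assume $\U^n\ne0$, say $U_1^n\ne0$; the case $U_2^n\ne0$ is symmetric. Then $F_1\ge U_1^n\ne0$, so $U_1^{n+1}\gg0$, and $U_2^{n+1}\ge0$. At the next step the second right-hand side is $U_2^{n+1}+\Delta tS_hU_1^{n+1}\ge\Delta tS_hU_1^{n+1}$. Since $S_h\ne0$, as shown after \eqref{eq:split}, some $s_i^h>0$, and $U_1^{n+1}\gg0$ then makes $S_hU_1^{n+1}\ne0$. Hence both right-hand sides at step $n+1$ are nonzero: the first because $U_1^{n+1}\ne0$. The lemma then gives $U_1^{n+2}\gg0$ and $U_2^{n+2}\gg0$. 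Symmetrically, if $U_2^n\ne0$ we use $R_h\ne0$ to transfer positivity from the second component to the first.

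The only subtle point is this transfer of positivity across stages, which rests on $R_h\ne0$ and $S_h\ne0$ derived from $r,s\not\equiv0$. Everything else is a direct application of Lemma~\ref{lem:discrete-maximum}, whose own content (an M-matrix argument using strictly positive off-diagonal entries of $D_{\delta,h}$ and irreducibility) we take as given.
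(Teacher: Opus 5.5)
Your proposal is correct and follows essentially the same route as the paper: the block-diagonal left-hand side has nonnegative diagonal shifts, Lemma~\ref{lem:discrete-maximum} is applied blockwise, and positivity passes between stages via $S_h\ne0$ (or $R_h\ne0$) together with $U_1^{n+1}\gg0$. Your explicit $Q_1,Q_2$ and the bounds $F_1\ge U_1^n$ and $F_2^{n+1}\ge\Delta t\,S_hU_1^{n+1}$ simply spell out steps the paper states more tersely.
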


\begin{proposition}[Fixed points]
\label{prop:fixed-points}
A nonnegative vector $\U^*$ is a fixed point of \eqref{eq:scheme} if and only if it is a steady state of \eqref{eq:semidiscrete}:
\[
 \big(A_{\delta,h}+B_h-Q_h(\U^*)\big)\U^*=0.
\]
Thus the time step does not shift semidiscrete equilibria.
\end{proposition}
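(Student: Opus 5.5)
The plan is to work directly from the definition of a fixed point of \eqref{eq:scheme}. By definition, $\U^*\ge0$ is a fixed point when $\U^{n}=\U^*$ produces $\U^{n+1}=\U^*$. By Theorem~\ref{thm:nonnegativity}, for every $\Delta t>0$ and every nonnegative $\U^n$ the solution of \eqref{eq:scheme} exists and is unique, because the matrix
\[
 I-\Delta t\big(A_{\delta,h}-Q_h(\U^n)\big)
\]
is invertible. Hence $\U^*$ is a fixed point if and only if $\U^*$ itself satisfies \eqref{eq:scheme} with $\U^n=\U^{n+1}=\U^*$:
\[
 \big[I-\Delta t\big(A_{\delta,h}-Q_h(\U^*)\big)\big]\U^*
 =
 \big(I+\Delta t B_h\big)\U^*.
\]
Uniqueness matters here. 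It ensures that "$\U^*$ solves the implicit equation" is the same as "the scheme maps $\U^*$ to $\U^*$", with no ambiguity about which solution the step selects.

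Next I will expand both sides and cancel the identity terms. What remains is
\[
 -\Delta t\big(A_{\delta,h}-Q_h(\U^*)\big)\U^*=\Delta t B_h\U^*,
\]
which is the same as
\[
 \Delta t\big(A_{\delta,h}+B_h-Q_h(\U^*)\big)\U^*=0.
\]
Since $\Delta t>0$, we can divide by it. This gives exactly the steady-state equation for \eqref{eq:semidiscrete}.

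Every step above is an equivalence, so the argument also runs backwards. If $\U^*\ge0$ is a steady state, then it satisfies the implicit equation. By uniqueness from Theorem~\ref{thm:nonnegativity}, the scheme then returns $\U^{n+1}=\U^*$. The conclusion that the time step does not shift equilibria is immediate, because $\Delta t$ drops out of the characterization entirely.

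I do not expect a genuine obstacle. The one point that needs care is invoking unique solvability for nonnegative data, because $Q_h(\U^*)$ is defined only for $\U^*\in\R_+^{2N_h}$. This is where the hypothesis $\U^*\ge0$ is used.
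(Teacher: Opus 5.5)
Your proposal is correct and follows essentially the same route as the paper: substitute $\U^{n}=\U^{n+1}=\U^*$ into \eqref{eq:scheme}, cancel the identity terms, divide by $\Delta t>0$, and reverse the calculation for the converse. Your explicit appeal to unique solvability from Theorem~\ref{thm:nonnegativity} in the converse is a small clarification that the paper's ``reversing the calculation'' leaves implicit.
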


\begin{lemma}[Finite-time growth bound]
\label{lem:finite-bound}
Let $G_{\delta,h}(\Delta t)$ be defined by \eqref{eq:linearized-step}.  Then
\begin{equation}
\label{eq:comparison-G}
 0\le\U^{n+1}\le G_{\delta,h}(\Delta t)\U^n.
\end{equation}
Consequently, for $n\Delta t\le T$,
\[
 \norm{\U^n}_\infty
 \le C_T\norm{\U^0}_\infty,
\]
with $C_T$ independent of $h$, $\delta$, and $\Delta t$.
\end{lemma}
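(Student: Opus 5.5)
The plan is to compare the nonlinear step \eqref{eq:scheme} with the linearized step \eqref{eq:linearized-step} through the inverse-positivity supplied by Lemma~\ref{lem:discrete-maximum}, and then to bound the iterates of $G_{\delta,h}(\Delta t)$ in the maximum norm.

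\textbf{Step 1: the comparison \eqref{eq:comparison-G}.} Nonnegativity $\U^{n+1}\ge0$ is Theorem~\ref{thm:nonnegativity}. Set $\mathcal M=I-\Delta t A_{\delta,h}$ and $F=(I+\Delta tB_h)\U^n\ge0$, so that $\mathcal M G_{\delta,h}(\Delta t)\U^n=F$. Rewriting \eqref{eq:scheme} gives
\[
 \mathcal M\U^{n+1}=F-\Delta t\,Q_h(\U^n)\U^{n+1}.
\]
Here $Q_h(\U^n)\ge0$ is diagonal and $\U^{n+1}\ge0$. Subtracting the two identities yields
\[
 \mathcal M\big(G_{\delta,h}(\Delta t)\U^n-\U^{n+1}\big)=\Delta t\,Q_h(\U^n)\U^{n+1}\ge0.
\]
The matrix $\mathcal M$ is block diagonal, and each block has the form $I-\Delta t(\mu D_{\delta,h}-Q)$ with $Q\ge0$. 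Lemma~\ref{lem:discrete-maximum} (with $F\ge0$) therefore applies blockwise and gives $G_{\delta,h}(\Delta t)\U^n-\U^{n+1}\ge0$. Since $G_{\delta,h}(\Delta t)$ is itself a nonnegative matrix, the inequality iterates to $0\le\U^n\le G_{\delta,h}(\Delta t)^n\U^0$.

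\textbf{Step 2: a uniform bound for $G_{\delta,h}(\Delta t)$.} Because $G_{\delta,h}(\Delta t)\ge0$, its $\infty$-norm equals $\|G_{\delta,h}(\Delta t)\one\|_\infty$. For each block $\mathcal A_h=\mu D_{\delta,h}-Q$ I will show $\|(I-\Delta t\mathcal A_h)^{-1}\|_\infty\le1$. Take $V$ solving $(I-\Delta t\mathcal A_h)V=F$ and let $i$ be an index where $V_i=\max_j V_j$. Then $(D_{\delta,h}V)_i\le0$ by the positivity of the off-diagonal weights in Proposition~\ref{prop:habitat-structure}(i) together with $D_{\delta,h}\one=0$. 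Also $q_iV_i\ge0$ whenever $V_i\ge0$. Hence
\[
 V_i\le V_i-\Delta t(\mu D_{\delta,h}V)_i+\Delta t\,q_iV_i=F_i\le\|F\|_\infty.
\]
The symmetric argument at a minimum index gives the lower bound. Combining this with $\|I+\Delta tB_h\|_\infty\le1+\Delta t\beta$, where $\beta:=\max\{\|r\|_\infty,\|s\|_\infty\}$ (cell averages are bounded by sup norms), yields
\[
 \|G_{\delta,h}(\Delta t)\|_\infty\le1+\beta\Delta t.
\]

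\textbf{Step 3: conclusion.} Since $0\le\U^n\le G_{\delta,h}(\Delta t)^n\U^0$ componentwise, $\|\U^n\|_\infty\le(1+\beta\Delta t)^n\|\U^0\|_\infty$. For $n\Delta t\le T$ this is at most $e^{\beta T}\|\U^0\|_\infty$, so one can take $C_T=e^{\beta T}$. This constant depends only on $r,s$, and not on $h$, $\delta$, $\Delta t$, $\mu_i$ or the kernel. I will phrase the norm as the max over both stages, consistent with $\norm{\cdot}$.

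The main point of care is Step 1: the difference $G_{\delta,h}(\Delta t)\U^n-\U^{n+1}$ must be written so that Lemma~\ref{lem:discrete-maximum} applies with a nonnegative right-hand side. This requires $\U^{n+1}\ge0$ to be known first, which Theorem~\ref{thm:nonnegativity} provides, and it uses $\tau\ge0$ together with the nonnegativity of the coefficients to ensure $Q_h(\U^n)\ge0$. Step 2 is routine, but the $\Delta t$-independence of $C_T$ rests on the contraction $\|(I-\Delta t\mathcal A_h)^{-1}\|_\infty\le1$, which holds for every $\Delta t>0$.
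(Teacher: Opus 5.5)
Your proposal is correct and follows essentially the same route as the paper. You use the same comparison identity $(I-\Delta tA_{\delta,h})(G_{\delta,h}(\Delta t)\U^n-\U^{n+1})=\Delta t\,Q_h(\U^n)\U^{n+1}\ge0$ together with Lemma~\ref{lem:discrete-maximum}, then the bounds $\norm{(I-\Delta tA_{\delta,h})^{-1}}_\infty\le1$ and $\norm{I+\Delta tB_h}_\infty\le1+\beta\Delta t$ to get $C_T=e^{\beta T}$; the only cosmetic difference is that you prove the contraction by a direct extremal-index argument, whereas the paper uses the supersolution $(I-\Delta tA_{\delta,h})\one\ge\one$ with inverse positivity to get $0\le(I-\Delta tA_{\delta,h})^{-1}\one\le\one$.
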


\begin{theorem}[Threshold equivalence]
\label{thm:threshold-equivalence}
For every $\Delta t>0$,
\begin{equation}
\label{eq:threshold-equivalence}
 \rho(G_{\delta,h})<1,\ =1,\ >1
 \quad\Longleftrightarrow\quad
 \lambda_{\delta,h}<0,\ =0,\ >0,
\end{equation}
respectively.  Equivalently,
\[
 \operatorname{sgn}\!\left(\rho(G_{\delta,h}(\Delta t))-1\right)
 =
 \operatorname{sgn}(\lambda_{\delta,h}).
\]
\end{theorem}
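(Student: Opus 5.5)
This is a regular splitting argument. Set $M:=I-\Delta tA_{\delta,h}$ and $N:=I+\Delta tB_h$, so that
\[
 I-\Delta tL_{\delta,h}=M-N,
 \qquad
 G_{\delta,h}(\Delta t)=M^{-1}N .
\]
Both pieces have the right sign structure. By Lemma~\ref{lem:discrete-maximum}, applied blockwise to $\mu_1D_{\delta,h}-H_h$ and $\mu_2D_{\delta,h}-E_h$, the matrix $M$ is invertible with $M^{-1}\ge0$. Moreover $N\ge0$, since $r,s\ge0$. Hence $G\ge0$, and Perron--Frobenius gives $\rho(G)$ as an eigenvalue with a nonnegative eigenvector.

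\emph{Direct computation with $\PhiV_h$.} Take the principal pair \eqref{eq:discrete-principal-pair}, so $\PhiV_h\gg0$ and $(M-N)\PhiV_h=(1-\Delta t\lambda_{\delta,h})\PhiV_h$. I will treat the sign cases separately.

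\textbf{Case $\lambda_{\delta,h}=0$.} Here $M\PhiV_h=N\PhiV_h$, so $G\PhiV_h=\PhiV_h$. Since $G\ge0$ and it has a strictly positive eigenvector with eigenvalue $1$, the standard Collatz--Wielandt fact (a positive eigenvector of a nonnegative matrix belongs to $\rho$) gives $\rho(G)=1$.

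\textbf{General case.} Write $\lambda=\lambda_{\delta,h}$. Then
\[
 N\PhiV_h=M\PhiV_h-(1-\Delta t\lambda)\PhiV_h ,
\]
so
\[
 G\PhiV_h=\PhiV_h-(1-\Delta t\lambda)M^{-1}\PhiV_h .
\]
The sign of $1-\Delta t\lambda$ is not controlled, so I rewrite using $M\PhiV_h=N\PhiV_h+\PhiV_h-\Delta t\lambda\PhiV_h$:
\[
 G\PhiV_h=\PhiV_h+\Delta t\lambda\,M^{-1}\PhiV_h .
\]
By Lemma~\ref{lem:discrete-maximum}, $M^{-1}\PhiV_h\gg0$. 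Therefore:
\begin{itemize}
\item if $\lambda<0$, then $G\PhiV_h\le\PhiV_h$ with strict inequality in every component, i.e.\ $G\PhiV_h\le c\PhiV_h$ for some $c<1$;
\item if $\lambda>0$, then $G\PhiV_h\ge c\PhiV_h$ for some $c>1$.
\end{itemize}
The subinvariance bound for nonnegative matrices then finishes the argument. If $G x\le cx$ with $x\gg0$, then $\rho(G)\le c$, via the weighted max-norm $\|y\|_x=\max_i|y_i|/x_i$, which gives $\|G\|_x\le c$. If $Gx\ge cx$ with $x\ge0$, $x\ne0$, then iterating gives $G^kx\ge c^kx$, hence $\rho(G)\ge c$. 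So $\rho(G)<1$, $=1$, or $>1$ according as $\lambda<0$, $=0$, or $>0$. Because the three cases are exhaustive and mutually exclusive, the equivalences in \eqref{eq:threshold-equivalence} follow.

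The main obstacle is the rewriting step: the naive expression involves $1-\Delta t\lambda$, whose sign is uncontrolled for large $\Delta t$. The identity $G\PhiV_h-\PhiV_h=\Delta t\lambda M^{-1}\PhiV_h$, together with the strict positivity $M^{-1}\PhiV_h\gg0$ from Lemma~\ref{lem:discrete-maximum}, removes this issue for every $\Delta t>0$. This is exactly where unconditional validity comes from. The argument relies only on $M^{-1}\ge0$ and $N\ge0$ for the two bounds, so irreducibility of $G$ is not needed.
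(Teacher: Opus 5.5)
Your strategy matches the paper's. You establish $G_{\delta,h}\PhiV_h-\PhiV_h=\Delta t\,\lambda_{\delta,h}M^{-1}\PhiV_h$ with $M^{-1}\PhiV_h\gg0$, then conclude with sub/super-invariance bounds. Your weighted max-norm $\max_i|y_i|/(\PhiV_h)_i$ is the same device as the paper's bound $|X|\le(\norm{X}_\infty/m_h)\PhiV_h$ combined with the spectral-radius formula. For $\lambda_{\delta,h}=0$ you quote the Perron--Frobenius fact that a positive eigenvector belongs to $\rho$, whereas the paper proves $\rho\le1$ by the same norm estimate.

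However, your derivation of the key identity is wrong as written. With $M=I-\Delta tA_{\delta,h}$ and $N=I+\Delta tB_h$ one has $M-N=-\Delta t(A_{\delta,h}+B_h)=-\Delta t\,L_{\delta,h}$, not $I-\Delta t\,L_{\delta,h}$, so $(M-N)\PhiV_h=-\Delta t\,\lambda_{\delta,h}\PhiV_h$. Your intermediate formula $G\PhiV_h=\PhiV_h-(1-\Delta t\lambda)M^{-1}\PhiV_h$ is therefore false. At $\lambda=0$ it would give $G\PhiV_h\ll\PhiV_h$, contradicting your own $\lambda=0$ case, where you silently used the correct relation $M\PhiV_h=N\PhiV_h$. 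The ``rewrite'' cannot repair this. The relation $M\PhiV_h=N\PhiV_h+\PhiV_h-\Delta t\lambda\PhiV_h$ you rewrite from is the same false relation, and it leads back to the same formula, which differs from the one you then state by $M^{-1}\PhiV_h\ne0$.

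The correct one-line derivation is
\[
 G_{\delta,h}\PhiV_h-\PhiV_h=M^{-1}(N-M)\PhiV_h=\Delta t\,M^{-1}L_{\delta,h}\PhiV_h=\Delta t\,\lambda_{\delta,h}M^{-1}\PhiV_h,
\]
exactly as in the paper. In particular, the ``main obstacle'' you identify does not exist: no factor $1-\Delta t\lambda_{\delta,h}$ ever appears. Validity for every $\Delta t>0$ comes simply from $M^{-1}\ge0$ for all $\Delta t$ (Lemma~\ref{lem:discrete-maximum}). With this correction, the rest of your argument is correct and coincides with the paper's proof. That includes $M^{-1}\PhiV_h\gg0$ blockwise from the lemma, the strict constants $c<1$ and $c>1$, and the exhaustive three-case conclusion.
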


\begin{corollary}[Numerical extinction]
\label{cor:extinction}
If $\lambda_{\delta,h}<0$, then every solution of \eqref{eq:scheme} with $\U^0\ge0$ satisfies
\[
 \norm{\U^n}_\infty
 \le Cq^n\norm{\U^0}_\infty,
 \qquad 0<q<1.
\]
\end{corollary}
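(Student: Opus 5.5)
The corollary follows by combining the comparison estimate of Lemma~\ref{lem:finite-bound} with the threshold equivalence of Theorem~\ref{thm:threshold-equivalence}. Fix $\Delta t>0$ and write $G=G_{\delta,h}(\Delta t)$. Since $\lambda_{\delta,h}<0$, Theorem~\ref{thm:threshold-equivalence} gives $\rho(G)<1$. We first check that $G$ is a nonnegative matrix. The factor $I+\Delta t B_h\ge0$ entrywise because $r,s\ge0$. For the factor $(I-\Delta t A_{\delta,h})^{-1}$, note that $A_{\delta,h}$ is block diagonal with blocks $\mu_kD_{\delta,h}-Q$, where $Q\ge0$ is diagonal. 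By Lemma~\ref{lem:discrete-maximum}, each block inverse maps nonnegative vectors to nonnegative vectors, so $(I-\Delta t A_{\delta,h})^{-1}\ge0$ entrywise.

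Next we iterate the comparison bound. By Theorem~\ref{thm:nonnegativity}, $\U^n\ge0$ for every $n$. Lemma~\ref{lem:finite-bound} gives $0\le\U^{n+1}\le G\U^n$. Because $G\ge0$ preserves the componentwise order, induction yields
\[
 0\le \U^n\le G^n\U^0 .
\]
Hence $\norm{\U^n}_\infty\le\norm{G^n\U^0}_\infty\le\norm{G^n}_\infty\norm{\U^0}_\infty$, where $\norm{\cdot}_\infty$ on $\R^{2N_h}$ is the max norm, which agrees with the paper's $\norm{\uvec}$.

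It remains to convert $\rho(G)<1$ into geometric decay of $\norm{G^n}_\infty$. Choose any $q$ with $\rho(G)<q<1$. Gelfand's formula $\norm{G^n}^{1/n}\to\rho(G)$ gives an index $n_0$ such that $\norm{G^n}_\infty\le q^n$ for $n\ge n_0$. Set $C=\max\{1,\max_{0\le n<n_0}\norm{G^n}_\infty q^{-n}\}$. Then $\norm{G^n}_\infty\le Cq^n$ for all $n\ge0$, and therefore $\norm{\U^n}_\infty\le Cq^n\norm{\U^0}_\infty$.

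No step here is a real obstacle. The only point requiring care is the order-preservation of $G$, which is needed to iterate the one-step comparison; it comes from Lemma~\ref{lem:discrete-maximum}, applied blockwise, together with $B_h\ge0$. The resulting constants $C$ and $q$ depend on $h$, $\delta$ and $\Delta t$, which the statement allows.
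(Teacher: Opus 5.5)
Your proof is correct. Its first half matches the paper's: both reduce the claim to $0\le\U^n\le G_{\delta,h}^n\U^0$ via Lemma~\ref{lem:finite-bound} and the order preservation of $G_{\delta,h}$, which you verify more explicitly than the paper does.

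The decay step is where the two arguments differ.
\begin{itemize}
\item The paper never passes through $\rho(G_{\delta,h})$. It reuses the intermediate inequality $G_{\delta,h}\PhiV_h\le c_-\PhiV_h$ with $c_-<1$ from the proof of Theorem~\ref{thm:threshold-equivalence}. It then dominates $\U^0\le(\norm{\U^0}_\infty/m_\Phi)\PhiV_h$ and iterates. This gives the explicit constants $q=c_-=\max_p(G_{\delta,h}\PhiV_h)_p/(\PhiV_h)_p$ and $C=\norm{\PhiV_h}_\infty/m_\Phi$, both computable from the discrete principal eigenvector.
\item You use only the conclusion $\rho(G_{\delta,h})<1$ together with Gelfand's formula. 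Your constant $C$ is non-explicit, since it depends on the unknown index $n_0$. In exchange, you may take any $q\in(\rho(G_{\delta,h}),1)$, so your rate can be arbitrarily close to the sharp one. By contrast, $c_-$ is in general only an upper bound for $\rho(G_{\delta,h})$: $\PhiV_h$ is an eigenvector of $L_{\delta,h}$ but typically not of $G_{\delta,h}$.
\end{itemize}

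Your route also needs no positive eigenvector at all beyond what Theorem~\ref{thm:threshold-equivalence} already supplies.
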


\begin{proposition}[Positive threshold]
\label{prop:positive-threshold}
Suppose $\lambda_{\delta,h}>0$.
\begin{enumerate}[label=(\roman*)]
\item For every $\Delta t>0$, $\rho(G_{\delta,h}(\Delta t))>1$, and hence $\U=0$ is a linearly unstable fixed point of \eqref{eq:scheme}.

\item If $\tau=0$, the semidiscrete system \eqref{eq:semidiscrete} admits a unique equilibrium $\U_h^*\gg0$, and
\[
 \lim_{t\to\infty}\U(t;\U^0)=\U_h^*
 \qquad\text{for every }\U^0\in\R_+^{2N_h}\setminus\{0\}.
\]

\item If $\U_h^*$ is hyperbolic and locally asymptotically stable for
\eqref{eq:semidiscrete}, then there exists $\Delta t_0>0$ such that,
for every $0<\Delta t<\Delta t_0$, $\U_h^*$ is a locally asymptotically
stable fixed point of \eqref{eq:scheme}.
\end{enumerate}
\end{proposition}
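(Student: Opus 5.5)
Part (i) follows from Theorem~\ref{thm:threshold-equivalence}: since $\lambda_{\delta,h}>0$, we get $\rho(G_{\delta,h}(\Delta t))>1$ for every $\Delta t>0$. The Jacobian of the map $\U^n\mapsto\U^{n+1}$ defined by \eqref{eq:scheme} at $\U=0$ is exactly $G_{\delta,h}(\Delta t)$, because $Q_h(\U)\U$ is quadratic. To see this, differentiate the implicit relation; the term $\Delta t\,Q_h(\U^n)\U^{n+1}$ is second order in $(\U^n,\U^{n+1})$. By Lemma~\ref{lem:discrete-maximum}, $G_{\delta,h}$ is a nonnegative matrix, so Perron--Frobenius gives a nonnegative eigenvector with eigenvalue $\rho>1$. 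Hence $0$ is linearly unstable.

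For part (ii), with $\tau=0$ the semidiscrete system \eqref{eq:semidiscrete} is cooperative: the off-diagonal entries of $L_{\delta,h}$ are nonnegative, and the nonlinearity $-\diag(\mathsf B_hU_1,F_hU_2)\U$ acts only on the diagonal. By \eqref{eq:strong-positive-linear} its Jacobian is irreducible, and the nonlinearity $F(\U)=(L_{\delta,h}-Q_h(\U))\U$ is strictly subhomogeneous: for $0<\theta<1$ and $\U\gg0$, $F(\theta\U)>\theta F(\U)$ since $b,f>0$. I would use the standard theorem for strongly monotone, strictly sublinear cooperative systems (Smith's monograph, or Hirsch/Zhao). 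That theorem says that if the linearization at $0$ has positive spectral bound and orbits are bounded, there is a unique positive equilibrium attracting all of $\R_+^{2N_h}\setminus\{0\}$.

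The ingredients for (ii) are as follows.
\begin{enumerate}[label=(\alph*)]
\item \emph{Sub-solution.} Take $\epsilon\PhiV_h$ with $\PhiV_h$ from \eqref{eq:discrete-principal-pair}. Then $F(\epsilon\PhiV_h)=\epsilon\PhiV_h(\lambda_{\delta,h}-O(\epsilon))\gg0$ for small $\epsilon$.
\item \emph{Super-solution.} Take $M\one$ with $M$ large. The quadratic terms $-b_i^hM^2$ and $-f_i^hM^2$ dominate the linear ones, since $b,f>0$ give positive cell averages, so $F(M\one)\ll0$.
\item \emph{Monotone convergence.} Solutions starting at the sub- and super-solutions converge monotonically to equilibria $\underline\U\le\overline\U$.
\item \emph{Uniqueness.} If $\U^*,\mathbf V^*\gg0$ are equilibria, take the smallest $\theta$ with $\theta\mathbf V^*\ge\U^*$. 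If $\theta>1$, strict subhomogeneity and the strong maximum principle give a contradiction. So $\underline\U=\overline\U$.
\item \emph{Global attraction.} Any nonzero nonnegative initial datum becomes $\gg0$ by strong positivity, so it can be sandwiched between $\epsilon\PhiV_h$ and $M\one$. Comparison then yields convergence to $\U_h^*$.
\end{enumerate}
Any equilibrium $\U^*\ne0$ with $\U^*\ge0$ is automatically $\gg0$ by strong positivity of the flow. I expect the uniqueness step (d) to be the main technical point. The strict inequality requires care where some components might touch: at a touching index the equilibrium equation forces a contradiction, because the diagonal nonlinear coefficient $b_i^h\theta V_{1,i}$ strictly exceeds $b_i^hV_{1,i}$.

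For part (iii), write the scheme as $\U^{n+1}=\Psi_{\Delta t}(\U^n)$. It is smooth near $\U_h^*$ by the implicit function theorem, using invertibility of $I-\Delta t(A_{\delta,h}-Q_h)$ from Lemma~\ref{lem:discrete-maximum}. By Proposition~\ref{prop:fixed-points}, $\U_h^*$ is a fixed point of $\Psi_{\Delta t}$. Differentiating gives $D\Psi_{\Delta t}(\U_h^*)=I+\Delta t\,J+O(\Delta t^2)$, where $J=DF(\U_h^*)$ is the Jacobian of \eqref{eq:semidiscrete}. Hyperbolicity and asymptotic stability mean every eigenvalue $\nu$ of $J$ satisfies $\Re\nu<0$. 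Then $|1+\Delta t\nu|^2=1+2\Delta t\Re\nu+\Delta t^2|\nu|^2<1$ for small $\Delta t$. Eigenvalue continuity absorbs the $O(\Delta t^2)$ perturbation, since $\rho(D\Psi)\le 1-c\Delta t+O(\Delta t^2)$. This gives $\rho(D\Psi_{\Delta t}(\U_h^*))<1$ for $\Delta t<\Delta t_0$, and the discrete linearization theorem concludes.
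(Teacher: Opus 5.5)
Your proposal is correct and follows essentially the same route as the paper: (i) comes from Theorem~\ref{thm:threshold-equivalence}, (ii) uses the monotone, strongly sublinear semiflow theorem with sub- and super-solutions $\varepsilon\PhiV_h$ and $M\one$, and (iii) uses the expansion $D\Psi_{\Delta t}(\U_h^*)=I+\Delta t J_*+O(\Delta t^2)$. The only differences are that you write out the uniqueness ($\theta$-scaling) and sandwich steps that the paper delegates to the cited theorem, and you obtain boundedness by comparison with $M\one$ instead of the paper's differential inequality for $M(t)$.
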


\subsection{Computable bounds for the continuous threshold}

\begin{lemma}[Bounds from an approximate eigenfunction]
\label{lem:residual-cont}
Let $\delta>0$ and let $\PhiV\in X_{++}\times X_{++}$ and $\zeta\in\R$.  Define
\begin{equation}
\label{eq:eta-cont}
 \eta(\PhiV,\zeta)
 =
 \max_{i=1,2}\sup_{x\in\overline\Omega}
 \frac{|(\cL_\delta\PhiV)_i(x)-\zeta\Phi_i(x)|}{\Phi_i(x)}.
\end{equation}
Then
\begin{equation}
\label{eq:residual-interval}
 \zeta-\eta(\PhiV,\zeta)
 \le\lambda_\delta
 \le\zeta+\eta(\PhiV,\zeta).
\end{equation}
\end{lemma}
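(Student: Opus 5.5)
We derive the interval \eqref{eq:residual-interval} directly from the generalized principal eigenvalue characterization \eqref{eq:CW-cont}, using $\PhiV$ itself as the test function in both the sup--inf and the inf--sup formulas.

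Set $\eta=\eta(\PhiV,\zeta)$. If $\eta=+\infty$ there is nothing to prove, so assume $\eta<\infty$. Since $\PhiV\in X_{++}\times X_{++}$, each $\Phi_i$ is bounded below by a positive constant on $\overline\Omega$. Because $\cL_\delta$ maps $X\times X$ into itself (the operator $\cD_\delta$ is bounded on $X$ and the coefficients are continuous), the quotients $(\cL_\delta\PhiV)_i/\Phi_i$ are continuous bounded functions, and $\eta$ is in fact finite. By the definition \eqref{eq:eta-cont}, for every $x\in\overline\Omega$ and $i\in\{1,2\}$,
\[
 \zeta-\eta
 \le\frac{(\cL_\delta\PhiV)_i(x)}{\Phi_i(x)}
 \le\zeta+\eta .
\]

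Taking the infimum over $x$ and $i$ in the left inequality gives
\[
 \inf_{x,i}\frac{(\cL_\delta\PhiV)_i}{\Phi_i}\ge\zeta-\eta .
\]
Since $\PhiV$ is admissible in the sup--inf formula of \eqref{eq:CW-cont}, it follows that $\lambda_\delta\ge\zeta-\eta$.

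Taking the supremum in the right inequality gives
\[
 \sup_{x,i}\frac{(\cL_\delta\PhiV)_i}{\Phi_i}\le\zeta+\eta .
\]
The inf--sup formula of \eqref{eq:CW-cont} then yields $\lambda_\delta\le\zeta+\eta$, which completes \eqref{eq:residual-interval}.

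The only substantive point is the validity of \eqref{eq:CW-cont}, that is, the equality of both generalized principal eigenvalues with the spectral bound \eqref{eq:lambda-cont}. This is taken from \cite{Onyido2024} under the standing hypotheses. One should check that the kernel class \eqref{eq:kernel-general} and Assumption~\ref{ass:coeff} fall within those hypotheses.

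If one preferred to avoid quoting the inf--sup half, the upper bound could be obtained instead by a comparison argument. Under the inequality $\cL_\delta\PhiV\le(\zeta+\eta)\PhiV$, positivity of the semigroup $e^{t\cL_\delta}$ gives $e^{t\cL_\delta}\PhiV\le e^{(\zeta+\eta)t}\PhiV$. Since $\PhiV\gg0$ dominates constants, this bounds $\|e^{t\cL_\delta}\|$ by $Ce^{(\zeta+\eta)t}$, and the growth bound $\omega_0\ge\lambda_\delta$ then yields the upper bound. However, the direct use of \eqref{eq:CW-cont} is shorter and is the route taken here.
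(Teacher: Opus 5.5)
Your proposal is correct and follows the paper's proof: both read off $(\zeta-\eta)\PhiV\le\cL_\delta\PhiV\le(\zeta+\eta)\PhiV$ from the definition of $\eta$, then insert $\PhiV$ as the test function in the sup--inf and inf--sup forms of \eqref{eq:CW-cont}. Your optional semigroup-comparison route is also valid. Because $\cL_\delta$ is bounded, its spectral bound equals its growth bound, so the same argument would give the lower bound as well; this is what the paper does in the proof of Theorem~\ref{thm:continuous-spectral-ac}.
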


\begin{corollary}[Sign determination from the bounds]
\label{cor:sign-interval}
Let $[\lambda_h^-,\lambda_h^+]$ denote bounds obtained from Lemma~\ref{lem:residual-cont}.  If $\lambda_h^+<0$, then $\lambda_\delta<0$.  If $\lambda_h^->0$, then $\lambda_\delta>0$.
\end{corollary}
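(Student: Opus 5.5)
The corollary follows directly from Lemma~\ref{lem:residual-cont}, so the plan is to take the residual interval literally. The bounds in question are $\lambda_h^-=\zeta-\eta(\PhiV,\zeta)$ and $\lambda_h^+=\zeta+\eta(\PhiV,\zeta)$. Here $\PhiV\in X_{++}\times X_{++}$ is a positive approximate eigenfunction and $\zeta\in\R$ is a trial value; in practice both are obtained from the discrete principal pair \eqref{eq:discrete-principal-pair} by interpolating or extending $\PhiV_h$ to a positive continuous function and setting $\zeta=\lambda_{\delta,h}$. The only content needed is that these quantities satisfy \eqref{eq:residual-interval}, namely $\lambda_h^-\le\lambda_\delta\le\lambda_h^+$.

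Given that inequality, the two cases are immediate. If $\lambda_h^+<0$, then $\lambda_\delta\le\lambda_h^+<0$. If $\lambda_h^->0$, then $\lambda_\delta\ge\lambda_h^->0$.

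One point deserves a check: that $\eta(\PhiV,\zeta)$ is finite, so that the interval is well defined. Since $\PhiV\in X_{++}\times X_{++}$ and $\cL_\delta$ maps $X\times X$ into itself, each $\Phi_i$ is bounded below by a positive constant on the compact set $\overline\Omega$, and each residual $(\cL_\delta\PhiV)_i-\zeta\Phi_i$ is continuous. Hence the supremum in \eqref{eq:eta-cont} is finite. There is no real obstacle here; all the substance lies in Lemma~\ref{lem:residual-cont}, which itself rests on the min--max characterization \eqref{eq:CW-cont}.

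It is also worth noting that the statement applies to whichever bounds are used, provided they are valid bounds from that lemma. The corollary therefore turns a computed residual certificate into a rigorous determination of the sign of the continuous threshold. By \cite[Theorem~1]{Onyido2023}, that sign decides extinction versus persistence.
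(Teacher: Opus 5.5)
Your proposal is correct and matches the paper's proof: once Lemma~\ref{lem:residual-cont} gives $\lambda_h^-\le\lambda_\delta\le\lambda_h^+$ with $\lambda_h^\pm=\zeta\pm\eta(\PhiV,\zeta)$, both sign conclusions follow immediately. Your extra check that $\eta(\PhiV,\zeta)<\infty$, via continuity of the residual and the positive lower bound on each $\Phi_i$, is a valid point that the paper leaves implicit.
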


When a positivity-preserving continuous reconstruction
$I_h\PhiV_h\in X_{++}\times X_{++}$ is available, substituting
$\PhiV=I_h\PhiV_h$ and $\zeta=\lambda_{\delta,h}$ into
\eqref{eq:eta-cont} gives an a posteriori interval for the continuous
principal spectrum point.

\section{Rectangular habitats: reflected Cartesian construction and local limit}
\label{sec:cartesian}
For rectangular habitats, we use a compactly supported reflected kernel and coordinatewise even reflection to recover the Neumann local limit.  In particular, $\mathcal D_\delta^{\Box}v\to\Delta v$ with homogeneous Neumann boundary condition and an $O(\delta^2)$ convergence rate.  Positivity of $J$ near the origin yields positive coordinate-neighbor interaction coefficients and the corresponding discrete maximum-principle and strict-positivity properties.  Nonlocal-to-local approximations with Neumann-type limits are studied in \cite{Cortazar2008,ShenXie2015}, and asymptotically compatible discretizations for parametrized nonlocal models are developed in \cite{TianDu2014,TianDu2020}.  In this section, let
\begin{equation}
\label{eq:box-domain}
 \Omega_{\Box}
 =
 \prod_{\ell=1}^{d}(0,L_{\ell}),
 \qquad L_\ell>0.
\end{equation}
Throughout the rectangular-habitat analysis we retain
Assumption~\ref{ass:coeff} and additionally assume $r(x)>0$ and $s(x)>0$ on $\overline{\Omega_{\Box}}$.

\subsection{Reflected nonlocal diffusion}

Let $B_1(0)$ denote the unit ball in $\R^d$.  Assume
\[
 J\in C_c(\R^d),\qquad
 \operatorname{supp}J\subset\overline{B_1(0)},
 \qquad
 J(z)=\widehat J(|z|)\ge0,
\]
with $J$ nonzero and positive in a neighborhood of the origin.  Fix
$\rho_J\in(0,1)$ such that
\begin{equation}
\label{eq:J-positive-ball}
 J(z)>0
 \qquad\text{for }|z|\le\rho_J.
\end{equation}
Define
\begin{equation}
\label{eq:m2}
 m_2
 =
 \int_{B_1(0)}J(z)z_1^2\,dz>0.
\end{equation}
Since $J(z)=J(|z|)$ and $B_1(0)$ is invariant under coordinate
reflections and permutations,
\begin{equation}
\label{eq:kernel-second-moment}
 \int_{B_1(0)}J(z)z_i z_j\,dz
 =
 \begin{cases}
  0, & i\ne j,\\
  m_2, & i=j,
 \end{cases}
 =
 m_2\delta_{ij}.
\end{equation}
For $v\in C(\overline{\Omega_{\Box}})$, let $E_Nv$ denote the repeated coordinatewise even reflection across the faces of $\Omega_{\Box}$.  Set $\delta_{\Box}:=\frac12\min_{1\le\ell\le d}L_\ell$.  For $0<\delta\le\delta_{\Box}$, define
\begin{equation}
\label{eq:nonlocal-op}
 (\cD_\delta^{\Box}v)(x)
 =
 \frac{2}{m_2\delta^2}
 \int_{B_1(0)}
 J(z)\big(E_Nv(x+\delta z)-v(x)\big)\,dz.
\end{equation}
At $\delta=0$, set
\begin{equation}
\label{eq:local-op}
 \cD_0^{\Box}v=\Delta v,
 \qquad
 \partial_\nu v=0\quad\text{on }\partial\Omega_{\Box}.
\end{equation}

\subsection{Cartesian discretization}

Choose $h>0$ such that $L_\ell=N_\ell h$ with $N_\ell\in\mathbb N$.  Let
\[
 \mathcal I_h
 =
 \prod_{\ell=1}^{d}\{0,\ldots,N_\ell-1\},
 \qquad
 N_h^{\Box}
 =
 \prod_{\ell=1}^{d}N_\ell,
\]
and use the cell centers
\[
 x_{\jj}
 =
 \big((j_1+\tfrac12)h,\ldots,(j_d+\tfrac12)h\big),
 \qquad
 \jj\in\mathcal I_h.
\]
Define $(\cR_hv)_{\jj}=v(x_{\jj})$.  For a grid function $V=(V_{\jj})_{\jj\in\mathcal I_h}$, let $V^E$ denote its repeated coordinatewise reflected extension to $\Z^d$.

For $\delta\ge h$, define
\[
 \mathcal Q_{\delta,h}
 =
 \{\qq\in\Z^d\setminus\{0\}:|h\qq|\le\delta\},
 \qquad
 \omega_{\qq}^{\delta,h}=J(h\qq/\delta),
\]
and
\begin{equation}
\label{eq:kappa}
 \kappa_{\delta,h}
 =
 \sum_{\qq\in\mathcal Q_{\delta,h}}
 \omega_{\qq}^{\delta,h}(hq_1)^2.
\end{equation}
Whenever $\kappa_{\delta,h}>0$, symmetry gives
\begin{equation}
\label{eq:disc-second-moment}
 \sum_{\qq\in\mathcal Q_{\delta,h}}
 \omega_{\qq}^{\delta,h}(hq_i)(hq_j)
 =
 \kappa_{\delta,h}\delta_{ij}.
\end{equation}
If $\delta\ge h$ and $h/\delta\le\rho_J$, define
\begin{equation}
\label{eq:Dh}
 (D_{\delta,h}^{\Box}V)_{\jj}
 =
 \frac{2}{\kappa_{\delta,h}}
 \sum_{\qq\in\mathcal Q_{\delta,h}}
 \omega_{\qq}^{\delta,h}
 \big(V^E_{\jj+\qq}-V_{\jj}\big).
\end{equation}
The cell-centered Neumann difference is
\begin{equation}
\label{eq:local-Dh}
 (D_{0,h}^{\Box}V)_{\jj}
 =
 \sum_{\ell=1}^{d}
 \frac{V^E_{\jj+\ee_{\ell}}-2V_{\jj}
       +V^E_{\jj-\ee_{\ell}}}{h^2}.
\end{equation}
For $0\le\delta<h$, set $D_{\delta,h}^{\Box}=D_{0,h}^{\Box}$.  For $\delta\ge h$, use \eqref{eq:Dh} when $h/\delta\le\rho_J$ and set $D_{\delta,h}^{\Box}=D_{0,h}^{\Box}$ otherwise.  If $h/\delta\le\rho_J$, then $\omega_{\pm\ee_\ell}^{\delta,h}=J((h/\delta)\ee_\ell)>0$ for $\ell=1,\ldots,d$, so in particular $\kappa_{\delta,h}>0$.  For every fixed $\delta>0$,
$h/\delta\to0$ as $h\to0$, and hence the nonlocal formula \eqref{eq:Dh}
is used for all sufficiently small $h$.  For $i\ne j$, write $w_{ij}^{\delta,h}=(D_{\delta,h}^{\Box})_{ij}$.

\begin{proposition}[Structural properties of the reflected Cartesian operator]
\label{prop:matrix-structure}
For $0\le\delta\le\delta_{\Box}$:
\begin{enumerate}[label=(\roman*)]
\item $(D_{\delta,h}^{\Box})_{ij}\ge0$ for $i\ne j$, and $D_{\delta,h}^{\Box}\one=0$;
\item $D_{\delta,h}^{\Box}$ is symmetric and negative semidefinite;
\item for every $V\in\R^{N_h^{\Box}}$,
\begin{equation}
\label{eq:energy}
 -h^dV^TD_{\delta,h}^{\Box}V
 =
 \frac{h^d}{2}
 \sum_{\substack{i,j=1\\ i\ne j}}^{N_h^{\Box}}
 w_{ij}^{\delta,h}(V_i-V_j)^2
 \ge0;
\end{equation}
\item $\ker D_{\delta,h}^{\Box}=\operatorname{span}\{\one\}$.
\end{enumerate}
\end{proposition}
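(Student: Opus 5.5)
The plan is to write $D_{\delta,h}^{\Box}$ explicitly as a matrix on the cell grid by folding the reflected extension back onto $\mathcal I_h$. The local operator $D_{0,h}^{\Box}$ is the special case of \eqref{eq:Dh} with weights supported on $\pm\ee_\ell$, so both cases can be treated together. In every case the operator has the form
\[
 (D_{\delta,h}^{\Box}V)_{\jj}=\sum_{\qq}c_{\qq}\big(V^E_{\jj+\qq}-V_{\jj}\big),
\]
with $c_{\qq}=c_{-\qq}\ge0$, where $c_{\qq}=2\omega_{\qq}^{\delta,h}/\kappa_{\delta,h}$ or $c_{\pm\ee_\ell}=h^{-2}$. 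For each $\kappa\in\Z^d$, the reflected extension gives $V^E_{\kk}=V_{\pi(\kk)}$, where $\pi:\Z^d\to\mathcal I_h$ is the coordinatewise folding map. Folding is even reflection about the half-integer cell faces, with period $2N_\ell$ in each coordinate. Hence
\[
 (D_{\delta,h}^{\Box})_{\jj\kk}=\sum_{\qq:\,\pi(\jj+\qq)=\kk}c_{\qq}\quad(\kk\ne\jj),
\]
and the diagonal entry is obtained by subtracting the total weight $\sum_{\qq}c_{\qq}$. Item (i) follows at once: off-diagonal entries are sums of nonnegative terms, and each row sums to zero because $V=\one$ gives $V^E\equiv1$.

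\textbf{Symmetry (item (ii)).} This is the main obstacle. I would show that the multiset of pairs $(\jj,\pi(\jj+\qq))$ is symmetric. It suffices to work one coordinate at a time, because both $\pi$ and the constraint $|h\qq|\le\delta$ (with $J$ radial) factor coordinatewise under sign changes of individual components of $\qq$.

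In one dimension, with $\delta\le\delta_{\Box}\le L_\ell/2$, every offset satisfies $|q_\ell|\le N_\ell/2$, so at most one reflection occurs. The needed fact is that for fixed $j,k\in\{0,\dots,N-1\}$ the admissible offsets form matched sets:
\[
 \#\{q: \pi(j+q)=k,\ |q|\le n\}=\#\{q:\pi(k+q)=j,\ |q|\le n\}.
\]
To prove it I would list the solutions explicitly. They are $q=k-j$ (the direct term), $q=-j-1-k$ (reflection at the left face), and $q=2N-1-k-j$ (reflection at the right face). Each of these is symmetric in $(j,k)$ up to the sign of $q$, and $c_q=c_{-q}$. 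The $d$-dimensional weights depend only on $|\qq|$, which is invariant under changing the sign of any component. So the coordinatewise bijections can be applied component by component to pair the terms, and symmetry follows.

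\textbf{Energy identity and semidefiniteness (item (iii)).} Once $D:=D_{\delta,h}^{\Box}$ is symmetric with zero row sums, the standard computation
\[
 -V^TDV=\tfrac12\sum_{i\ne j}w_{ij}(V_i-V_j)^2
\]
gives \eqref{eq:energy}. This is the same pairing argument as in Proposition~\ref{prop:habitat-structure}(iv), with $m_i=h^d$. Negative semidefiniteness in (ii) follows from it.

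\textbf{Kernel (item (iv)).} If $DV=0$, then each term $w_{ij}(V_i-V_j)^2$ vanishes. The coordinate neighbors $\jj,\jj\pm\ee_\ell$ inside $\mathcal I_h$ have $w\ge c_{\ee_\ell}>0$. This uses $J((h/\delta)\ee_\ell)>0$ when $h/\delta\le\rho_J$, or $h^{-2}$ in the local case. The folding contributions only add nonnegative weight, so they cannot cancel this. The nearest-neighbor grid graph on the box is connected, so $V$ is constant. Conversely $D\one=0$, which gives $\ker D=\operatorname{span}\{\one\}$.
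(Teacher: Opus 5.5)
Your proposal is correct and follows essentially the paper's route. You fold the reflected extension back onto $\mathcal I_h$, pair each direct or reflected contribution from $\jj$ to $\kk$ with a reverse contribution whose offset has the same Euclidean length (your explicit preimage list $k-j$, $-j-k-1$, $2N-1-j-k$ makes the paper's one-line pairing claim concrete), and use radial weights for symmetry. The standard pairing identity then gives (iii), and positive nearest-neighbour weights together with grid connectivity give (iv).

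One minor nit: in the local case the bound $|q_\ell|\le N_\ell/2$ fails when $N_\ell=1$. There $j+q\in[-1,N]$ still involves at most one reflection, so your enumeration and the rest of the argument are unaffected.
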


\begin{proof}
Let $R:\mathbb Z^d\to\mathcal I_h$ denote the coordinatewise reflection map
defined by $V^E_{\kk}=V_{R(\kk)}$.  In one coordinate,
\[
 R_\ell^{-1}(k)
 =
 \{\,k+2mN_\ell,\,-k-1+2mN_\ell:m\in\mathbb Z\,\}.
\]
Thus, every reflected contribution from one cell to another has a
corresponding reverse contribution generated by an offset of the same Euclidean
length.  Since $J$ is radial, the two contributions have the same weight.
Consequently,
\[
w_{ij}^{\delta,h}=w_{ji}^{\delta,h}.
\]

For $i\ne j$, the coefficients in \eqref{eq:Dh} therefore satisfy
\[
 w_{ij}^{\delta,h}\ge0,\qquad w_{ij}^{\delta,h}=w_{ji}^{\delta,h},\qquad
 (D_{\delta,h}^{\Box})_{ii}=-\sum_{j\ne i}w_{ij}^{\delta,h}.
\]
Consequently,
\[
 (D_{\delta,h}^{\Box}\one)_i=-\sum_{j\ne i}w_{ij}^{\delta,h}+\sum_{j\ne i}w_{ij}^{\delta,h}=0,
 \qquad
 (D_{\delta,h}^{\Box})^T=D_{\delta,h}^{\Box}.
\]
Therefore
\[
 -h^dV^TD_{\delta,h}^{\Box}V
 =
 \frac{h^d}{2}
 \sum_{\substack{i,j=1\\ i\ne j}}^{N_h^{\Box}}
 w_{ij}^{\delta,h}(V_i-V_j)^2
 \ge0.
\]

If the nonlocal formula \eqref{eq:Dh} is used, then \eqref{eq:J-positive-ball} gives $\omega_{\pm\ee_\ell}^{\delta,h}>0$ for $\ell=1,\ldots,d$.  Thus equality in \eqref{eq:energy} implies equality of the values at every
pair of neighboring cell centers in each coordinate direction.  Repeating
these equalities along the Cartesian coordinate lines gives $V_{\jj}=V_{\kk}$ for all $\jj,\kk\in\mathcal I_h$.  If the local formula \eqref{eq:local-Dh} is used, then
\[
 -h^dV^TD_{0,h}^{\Box}V
 =
 h^{d-2}
 \sum_{\ell=1}^d
 \sum_{\substack{\jj\in\mathcal I_h\\0\le j_\ell\le N_\ell-2}}
 \big(V_{\jj+\ee_\ell}-V_{\jj}\big)^2.
\]
Hence
\[
 V^TD_{0,h}^{\Box}V=0
 \quad\Longrightarrow\quad
 V_{\jj+\ee_\ell}=V_{\jj}
\]
for every $\ell=1,\ldots,d$ and every
$\jj\in\mathcal I_h$ with $0\le j_\ell\le N_\ell-2$.  Successive coordinate steps therefore give $V_{\jj}=V_{\kk}$ for all $\jj,\kk\in\mathcal I_h$.  Thus in both cases $\ker D_{\delta,h}^{\Box}=\operatorname{span}\{\one\}$.
\end{proof}

\begin{proposition}[Fixed-$\delta$ consistency of the reflected operator]
\label{prop:fixed-delta-consistency}
Fix $0<\delta\le\delta_{\Box}$.  For every $v\in C(\overline{\Omega_{\Box}})$,
\begin{equation}
\label{eq:fixed-scale}
 \norm{D_{\delta,h}^{\Box}\cR_hv
       -\cR_h\cD_\delta^{\Box}v}_\infty
 \longrightarrow0
 \qquad(h\to0).
\end{equation}
\end{proposition}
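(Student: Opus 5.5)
We fix $\delta\in(0,\delta_\Box]$ and restrict to $h$ small enough that $h\le\delta$ and $h/\delta\le\rho_J$. The nonlocal formula \eqref{eq:Dh} is then in force. The plan is to view $D^{\Box}_{\delta,h}\cR_hv$ at a node $x_{\jj}$ as a Riemann sum for the integral in \eqref{eq:nonlocal-op}, and to treat the mismatch between $\kappa_{\delta,h}$ and the continuous moment as a separate normalization error.

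\textbf{Step 1: matching the reflections.} The first observation is that reflection commutes with sampling. For $\kk\in\Z^d$,
\[
(\cR_hv)^E_{\kk}=E_Nv(x_{\kk}),
\qquad x_{\kk}=\big((k_1+\tfrac12)h,\ldots,(k_d+\tfrac12)h\big).
\]
This holds because the cell-centered reflection $k\mapsto -k-1$ in one coordinate corresponds to $x_\ell\mapsto -x_\ell$, and $k\mapsto 2N_\ell-1-k$ corresponds to $x_\ell\mapsto 2L_\ell-x_\ell$. Both are exactly the even reflections defining $E_N$. Consequently
\[
(D_{\delta,h}^{\Box}\cR_hv)_{\jj}
=\frac{2}{\kappa_{\delta,h}}\sum_{\qq\in\mathcal Q_{\delta,h}}J(h\qq/\delta)\big(E_Nv(x_{\jj}+h\qq)-v(x_{\jj})\big).
\]

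\textbf{Step 2: reduction to Riemann sums.} Change variables $z=h\qq/\delta$, so the lattice $(h/\delta)\Z^d$ has cell volume $(h/\delta)^d$. Write
\[
(D_{\delta,h}^{\Box}\cR_hv)_{\jj}
=\frac{2}{m_2\delta^2}\cdot\frac{m_2\delta^2(h/\delta)^d}{\kappa_{\delta,h}}\cdot S_h(x_{\jj}),
\qquad
S_h(x)=\sum_{z\in(h/\delta)\Z^d\setminus\{0\}}(h/\delta)^d J(z)\,g_x(z),
\]
where $g_x(z)=E_Nv(x+\delta z)-v(x)$.

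\textbf{Step 3: convergence of the normalization and of $S_h$.} Two limits are needed.
\begin{enumerate}[label=(\alph*)]
\item The rescaled moment: $\delta^{-2}(h/\delta)^d\kappa_{\delta,h}=\sum (h/\delta)^dJ(z)z_1^2$ is a Riemann sum of the continuous, compactly supported function $J(z)z_1^2$ on a lattice of mesh $h/\delta\to0$. Hence it converges to $m_2>0$, and the normalization factor tends to $1$. Note that $\delta$ is fixed and only $h\to0$.
\item The sums $S_h(x)\to\int_{B_1}J(z)g_x(z)\,dz$ uniformly in $x\in\overline{\Omega_\Box}$.
\end{enumerate}

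Step 3(b) is the main obstacle, because the integrand depends on $x$. I would handle it with a modulus-of-continuity argument, as in Proposition~\ref{prop:general-fixed-delta-consistency}.

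\begin{itemize}
\item $E_Nv$ is continuous on $\R^d$, since even reflection preserves continuity. On the bounded set $\overline{\Omega_\Box}+\delta\overline{B_1}$ it is uniformly continuous, with a modulus of continuity $\omega$.
\item Partition space into the cubes $Q_z$ of side $h/\delta$ attached to each lattice point $z$. The error is
\[
\sum_z\int_{Q_z}\big(J(z)g_x(z)-J(y)g_x(y)\big)\,dy,
\]
and the omitted $z=0$ term contributes only $O((h/\delta)^d)$.
\item On each cube split $J(z)g_x(z)-J(y)g_x(y)=(J(z)-J(y))g_x(z)+J(y)(g_x(z)-g_x(y))$, with $|g_x|\le2\norm{v}_\infty$.
\item The first term is controlled by $\omega_J(\sqrt d\,h/\delta)$ times $2\norm{v}_\infty$ times the volume of a fixed neighborhood of $B_1$. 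The second is at most $\norm{J}_\infty\,\omega(\sqrt d\,h)$ times that volume, since $|g_x(z)-g_x(y)|\le\omega(\delta|z-y|)$ with $\delta|z-y|\le\sqrt d\,h$.
\end{itemize}
Both bounds are independent of $x$, so the convergence is uniform.

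\textbf{Step 4: conclusion.} At $x=x_{\jj}$, multiply the uniform limit of Step 3(b) by the normalization factor from Step 3(a), which tends to $1$. Since $S_h$ is bounded by $2\norm{v}_\infty\cdot O(1)$, the product converges to $(\cD_\delta^\Box v)(x_{\jj})$ uniformly in $\jj$. This yields \eqref{eq:fixed-scale}.
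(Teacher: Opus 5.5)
Your argument is correct and is essentially the paper's proof: you rescale by $z=h\qq/\delta$, write the operator as a ratio of two Riemann sums on the lattice $(h/\delta)\Z^d$, and let the denominator tend to $m_2$. Uniform convergence of the numerator then comes from uniform continuity of $J(z)\big(E_Nv(x+\delta z)-v(x)\big)$; you make this explicit via moduli of continuity and the identity $(\cR_hv)^E_{\kk}=E_Nv(x_{\kk})$, which the paper uses tacitly. One slip: in Step~2 the factor $(h/\delta)^d$ belongs in the denominator, so the normalization should read $m_2\delta^2/\big((h/\delta)^d\kappa_{\delta,h}\big)$, which is exactly what your Step~3(a) shows tends to $1$.
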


\begin{proof}
For fixed $\delta>0$, \eqref{eq:Dh} is used for all sufficiently small $h$.  Set $\theta_h=h/\delta$.  Then
\[
 (D_{\delta,h}^{\Box}\cR_hv)_{\jj}
 =
 \frac{2}{\delta^2}
 \frac{\displaystyle
 \theta_h^d\sum_{\qq\in\Z^d}
 J(\theta_h\qq)
 \big(E_Nv(x_{\jj}+\delta\theta_h\qq)-v(x_{\jj})\big)}
 {\displaystyle
 \theta_h^d\sum_{\qq\in\Z^d}
 J(\theta_h\qq)(\theta_hq_1)^2}.
\]
The denominator converges to $m_2$.  Define $F(x,z):=J(z)(E_Nv(x+\delta z)-v(x))$.
Since $F$ is continuous on
$\overline{\Omega_{\Box}}\times\overline{B_1(0)}$, it is uniformly
continuous.  Hence
\[
 \sup_{x\in\overline{\Omega_{\Box}}}
 \left|
 \theta_h^d\sum_{\qq\in\Z^d}F(x,\theta_h\qq)
 -
 \int_{B_1(0)}F(x,z)\,dz
 \right|
 \longrightarrow0
 \qquad(h\to0).
\]
Division by the positive limit $m_2$ proves \eqref{eq:fixed-scale}.
\end{proof}

\subsection{Local-limit consistency}

For a scalar function $w\in C^4(\R^d)$, set $|D^4w(x)|=\sup_{|\xi|=1}|D^4w(x)[\xi,\xi,\xi,\xi]|$. The regularity condition $E_Nv\in C^4(\R^d)$ has a simple
facewise characterization that will be useful below.  If
$v\in C^4(\overline{\Omega_{\Box}})$, then its repeated coordinatewise even
extension is $C^4$ precisely when, for every coordinate $\ell$,
\begin{equation}
\label{eq:reflection-compatibility}
 \partial_\ell v=0,
 \qquad
 \partial_\ell^3 v=0
 \quad\text{on the two faces }x_\ell=0\text{ and }x_\ell=L_\ell.
\end{equation}
For an even reflection across the face
$x_\ell=0$ or $x_\ell=L_\ell$, derivatives involving an odd number of
differentiations with respect to $x_\ell$ change sign across the face, whereas
those involving an even number do not.  Consequently, $C^4$-regularity across
the face requires
\[
\partial_\ell v=\partial_\ell^3v=0.
\]
Mixed derivatives involving an odd number of differentiations with respect to
$x_\ell$ satisfy the corresponding compatibility conditions by tangentially
differentiating $\partial_\ell v=0$ or $\partial_\ell^3v=0$, as appropriate.
Applying this argument successively to each coordinate gives
$C^4$-compatibility at edges and corners as well.

\begin{lemma}[Continuous local consistency on the box]
\label{lem:cont-local-consistency}
Suppose $v\in C^4(\overline{\Omega_{\Box}})$ and $E_Nv\in C^4(\R^d)$.  Let $m_4=\int_{B_1(0)}J(z)|z|^4\,dz$.  Then, for $0<\delta\le\delta_{\Box}$,
\begin{equation}
\label{eq:cont-local-error}
 \norm{\cD_\delta^{\Box}v-\Delta v}_{L^\infty(\Omega_{\Box})}
 \le
 \frac{m_4}{12m_2}\,\delta^2
 \norm{|D^4E_Nv|}_{L^\infty(\R^d)}.
\end{equation}
\end{lemma}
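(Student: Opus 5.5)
We work pointwise with the smooth extension $w:=E_Nv\in C^4(\R^d)$ and Taylor-expand $w(x+\delta z)$ about $x$. Fix $x\in\Omega_{\Box}$. Since $w=v$ on $\overline{\Omega_{\Box}}$, we have $w(x)=v(x)$, and the definition \eqref{eq:nonlocal-op} gives
\[
 (\cD_\delta^{\Box}v)(x)=\frac{2}{m_2\delta^2}\int_{B_1(0)}J(z)\big(w(x+\delta z)-w(x)\big)\,dz .
\]
This no longer depends on $\delta\le\delta_{\Box}$ in any essential way; that restriction only guarantees that $x+\delta z$ stays within one reflection layer, which is irrelevant once $w$ is known to be globally $C^4$.

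We use Taylor's formula to third order with integral remainder:
\[
 w(x+\delta z)-w(x)=\delta\nabla w(x)\cdot z+\tfrac{\delta^2}{2}D^2w(x)[z,z]+\tfrac{\delta^3}{6}D^3w(x)[z,z,z]+R(x,z),
\]
where
\[
 R(x,z)=\frac{\delta^4}{6}\int_0^1(1-t)^3D^4w(x+t\delta z)[z,z,z,z]\,dt .
\]
Since $J$ is radial, it is invariant under $z\mapsto -z$, so the odd-order terms integrate to zero. The second-order term gives $\tfrac{\delta^2}{2}\sum_{i,j}\partial_{ij}w(x)\int J z_iz_j\,dz=\tfrac{\delta^2}{2}m_2\Delta w(x)$ by \eqref{eq:kernel-second-moment}. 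Multiplying by $2/(m_2\delta^2)$ yields exactly $\Delta w(x)=\Delta v(x)$; this uses that $w$ coincides with $v$ near interior points, and for boundary points it follows by continuity.

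For the remainder we bound $|D^4w(y)[z,z,z,z]|\le |z|^4\,\norm{|D^4w|}_{L^\infty(\R^d)}$ using the stated sup-over-unit-vectors definition and homogeneity. Since $\int_0^1(1-t)^3dt=1/4$, we get $|R(x,z)|\le \frac{\delta^4}{24}|z|^4\norm{|D^4w|}_\infty$. Integrating against $J$ and multiplying by $2/(m_2\delta^2)$ gives
\[
 \frac{2}{m_2\delta^2}\cdot\frac{\delta^4 m_4}{24}\norm{|D^4w|}_\infty=\frac{m_4}{12m_2}\delta^2\norm{|D^4E_Nv|}_{L^\infty(\R^d)},
\]
which is exactly \eqref{eq:cont-local-error}. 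Taking the supremum over $x$ then finishes the proof.

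No step is a serious obstacle; the only points needing care are the bookkeeping that yields the constant $1/12$ (the integral-form remainder gives $1/24$, and the factor $2$ in the operator doubles it) and the justification that applying Taylor's theorem across reflection faces is legitimate. The latter is precisely the hypothesis $E_Nv\in C^4(\R^d)$, characterized by \eqref{eq:reflection-compatibility}.
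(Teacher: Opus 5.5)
Your proposal is correct and follows essentially the same route as the paper: Taylor-expand $E_Nv(x+\delta z)$ to third order, kill the odd terms by the symmetry of $J$, use the second-moment identity to get $m_2\Delta v(x)$, and bound the remainder by $\delta^4|z|^4/24$ times $\norm{|D^4E_Nv|}_{L^\infty(\R^d)}$, which yields the constant $m_4/(12m_2)$. The only difference is that you write out the integral form of the remainder explicitly, whereas the paper states the bound directly.
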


\begin{proof}
Taylor expansion of $E_Nv(x+\delta z)$ about $x$ gives
\[
 E_Nv(x+\delta z)-v(x)
 =
 \delta\nabla v(x)\cdot z
 +\frac{\delta^2}{2}D^2v(x)[z,z]
 +\frac{\delta^3}{6}D^3v(x)[z,z,z]
 +R_4(x,z),
\]
with
\[
 |R_4(x,z)|
 \le
 \frac{\delta^4|z|^4}{24}
 \norm{|D^4E_Nv|}_{L^\infty(\R^d)}.
\]
The odd terms integrate to zero under $z\mapsto-z$.  By
\eqref{eq:kernel-second-moment},
\[
 \int_{B_1(0)}J(z)D^2v(x)[z,z]\,dz
 =
 \sum_{i,j=1}^d
 \partial_{ij}v(x)
 \int_{B_1(0)}J(z)z_i z_j\,dz
 =
 m_2\sum_{i=1}^d\partial_{ii}v(x)
 =
 m_2\Delta v(x).
\]
Multiplication by $2/(m_2\delta^2)$ proves \eqref{eq:cont-local-error}.
\end{proof}

\begin{theorem}[Discrete local-limit consistency]
\label{thm:discrete-local-consistency}
Under the assumptions of Lemma~\ref{lem:cont-local-consistency},
\begin{equation}
\label{eq:discrete-local-consistency}
 \norm{D_{\delta,h}^{\Box}\cR_hv-\cR_h\Delta v}_\infty
 \le C_v(\delta^2+h^2),
 \qquad
 0\le\delta\le\delta_{\Box},
\end{equation}
where $C_v$ is independent of the ratio $h/\delta$.
\end{theorem}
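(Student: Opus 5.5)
The proof will split into the branches that define $D_{\delta,h}^{\Box}$ and, in each branch, compare directly with $\Delta v$ at the grid points. Write $w=E_Nv\in C^4(\R^d)$ and $M_4=\norm{|D^4w|}_{L^\infty(\R^d)}$. Reflection commutes with sampling at cell centers, because the reflected images of cell centers are again cell centers of the extended lattice. Hence $(\cR_hv)^E_{\kk}=w(x_{\kk})$ for every $\kk\in\Z^d$, where $x_{\kk}$ is the formal center $((k_1+\tfrac12)h,\ldots)$. Thus in both formulas \eqref{eq:Dh} and \eqref{eq:local-Dh} we may replace $V^E_{\jj+\qq}$ by $w(x_{\jj}+h\qq)$. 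This is the key observation: it turns every boundary cell into an interior one for the smooth function $w$.

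\emph{Local branch} ($\delta<h$, or $h/\delta>\rho_J$). Here $D^{\Box}_{\delta,h}=D^{\Box}_{0,h}$. The standard Taylor expansion of $w$ along $\pm h\ee_\ell$ gives
\[
\bigl|(D_{0,h}^{\Box}\cR_hv)_{\jj}-\Delta v(x_{\jj})\bigr|\le \frac{d\,h^2}{12}M_4 .
\]
This is already bounded by $C_v(\delta^2+h^2)$. Note that in the case $h/\delta>\rho_J$ we also have $h\ge\rho_J\delta$, so $\delta^2\le h^2/\rho_J^2$; either way no ratio constraint enters.

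\emph{Nonlocal branch} ($h\le\rho_J\delta$). Expand $w(x_{\jj}+h\qq)-w(x_{\jj})$ to fourth order exactly as in Lemma~\ref{lem:cont-local-consistency}. The weights $\omega_{\qq}^{\delta,h}$ and the set $\mathcal Q_{\delta,h}$ are invariant under $\qq\mapsto-\qq$, so the first- and third-order terms cancel exactly in the discrete sum. By \eqref{eq:disc-second-moment}, the second-order term equals $\tfrac12\kappa_{\delta,h}\Delta v(x_{\jj})$. The prefactor $2/\kappa_{\delta,h}$ therefore produces $\Delta v(x_{\jj})$ exactly, and the error is at most
\[
\frac{2}{\kappa_{\delta,h}}\sum_{\qq\in\mathcal Q_{\delta,h}}\omega_{\qq}^{\delta,h}\frac{|h\qq|^4}{24}M_4
\le \frac{\delta^2}{12}M_4\,\frac{\sum_{\qq}\omega_{\qq}^{\delta,h}|h\qq|^2}{\kappa_{\delta,h}},
\]
using $|h\qq|\le\delta$. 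By \eqref{eq:disc-second-moment}, $\sum_{\qq}\omega_{\qq}^{\delta,h}|h\qq|^2=d\,\kappa_{\delta,h}$. Hence the error is at most $\tfrac{d}{12}\delta^2M_4$, with no dependence on $h/\delta$. This exact-moment argument is what delivers uniformity in $h/\delta$: we never compare $\kappa_{\delta,h}$ with $m_2\delta^{d+2}h^{-d}$, and we never pass through $\cD^{\Box}_\delta$. That comparison would cost a quadrature error depending on $\theta_h=h/\delta$.

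The only delicate point I expect is the justification of the exact symmetric moment identity \eqref{eq:disc-second-moment} together with the positivity of $\kappa_{\delta,h}$. Positivity holds because $\omega_{\pm\ee_1}^{\delta,h}>0$ when $h/\delta\le\rho_J$. The identity itself follows from radial symmetry of $J$ under coordinate sign changes and permutations of $\Z^d$, which preserve $\mathcal Q_{\delta,h}$. Collecting both branches gives \eqref{eq:discrete-local-consistency} with $C_v=\tfrac{d}{12}\max(1,\rho_J^{-2})M_4$. The $\delta=0$ case is contained in the local branch.
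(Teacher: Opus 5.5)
Your proposal is correct and follows essentially the same argument as the paper. Both split into a local branch, handled by the standard $O(h^2)$ central-difference bound for the $C^4$ even extension, and a nonlocal branch. In the nonlocal branch the odd terms cancel, the exact discrete second-moment identity reproduces $\Delta v(x_{\jj})$ with no quadrature error, and the bound $\sum_{\qq}\omega_{\qq}^{\delta,h}|h\qq|^4\le\delta^2\,d\,\kappa_{\delta,h}$ gives the ratio-free estimate $\frac{d}{12}\delta^2\norm{|D^4E_Nv|}_{L^\infty(\R^d)}$.

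Your explicit identity $(\cR_hv)^E_{\kk}=E_Nv(x_{\kk})$ makes precise a step the paper uses only implicitly. The factor $\max(1,\rho_J^{-2})$ in your constant is harmless but unnecessary, since the local-branch error $\frac{d}{12}h^2M_4$ is already at most $\frac{d}{12}M_4(\delta^2+h^2)$.
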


\begin{proof}
If \eqref{eq:local-Dh} is used, then Taylor expansion of the
$C^4$ even extension gives
\[
 \norm{D_{0,h}^{\Box}\cR_hv-\cR_h\Delta v}_\infty
 \le
 \frac{h^2}{12}
 \sum_{\ell=1}^d
 \norm{\partial_\ell^4E_Nv}_{L^\infty(\R^d)}.
\]
Otherwise $\delta\ge h$, $h/\delta\le\rho_J$, and \eqref{eq:Dh} is used.  Taylor expansion gives
\[
\begin{aligned}
 E_Nv(x_{\jj}+h\qq)-v(x_{\jj})
 &=
 h\nabla v(x_{\jj})\cdot\qq
 +\frac{h^2}{2}D^2v(x_{\jj})[\qq,\qq]\\
 &\quad
 +\frac{h^3}{6}D^3v(x_{\jj})[\qq,\qq,\qq]
 +R_{\jj,\qq}.
\end{aligned}
\]
The odd terms cancel under $\qq\leftrightarrow-\qq$, and
\eqref{eq:disc-second-moment} makes the quadratic term exactly
$\Delta v(x_{\jj})$.  For the fourth-order remainder,
\[
 |R_{\jj,\qq}|
 \le
 \frac{|h\qq|^4}{24}
 \norm{|D^4E_Nv|}_{L^\infty(\R^d)}.
\]
Since $|h\qq|\le\delta$ for every contributing offset,
\[
 \sum_{\qq}\omega_{\qq}^{\delta,h}|h\qq|^4
 \le
 \delta^2
 \sum_{\qq}\omega_{\qq}^{\delta,h}|h\qq|^2.
\]
Moreover, summing \eqref{eq:disc-second-moment} over the coordinate directions gives
\[
 \sum_{\qq}\omega_{\qq}^{\delta,h}|h\qq|^2
 =d\,\kappa_{\delta,h}.
\]
Consequently,
\[
 \left|
 \frac{2}{\kappa_{\delta,h}}
 \sum_{\qq}\omega_{\qq}^{\delta,h}R_{\jj,\qq}
 \right|
 \le
 \frac{d}{12}\,\delta^2
 \norm{|D^4E_Nv|}_{L^\infty(\R^d)}.
\]
The factor $\kappa_{\delta,h}$ cancels explicitly, so the constant in this
estimate does not depend on $h/\delta$.  Combining the two cases proves
\eqref{eq:discrete-local-consistency}.
\end{proof}

\subsection{Box thresholds and asymptotic compatibility}

Define
\[
 \cL_\delta^{\Box}
 =
 \begin{pmatrix}
 \mu_1\cD_\delta^{\Box}-H&r\\
 s&\mu_2\cD_\delta^{\Box}-e
 \end{pmatrix},
 \qquad
 \lambda_\delta^{\Box}
 :=
 \sup\{\Re z:z\in\sigma(\cL_\delta^{\Box})\},
\]
and let $\lambda_0^{\Box}$ be the principal eigenvalue of the corresponding local Neumann operator $\cL_0^{\Box}$.

Let $A_{\delta,h}^{\Box}$, $B_h^{\Box}$, $Q_h^{\Box}$, and $L_{\delta,h}^{\Box}$ be obtained from \eqref{eq:split}, \eqref{eq:Q}, and \eqref{eq:Lh} by replacing $D_{\delta,h}$ with $D_{\delta,h}^{\Box}$ and evaluating the coefficients at the cell centers.  Since $L_{\delta,h}^{\Box}$ is finite dimensional, its spectral bound is attained; define
\[
 \lambda_{\delta,h}^{\Box}
 :=
 \max\{\Re z:z\in\sigma(L_{\delta,h}^{\Box})\},
\]
and denote the case $\delta=0$ by $\lambda_{0,h}^{\Box}$.  The corresponding semi-implicit solution is denoted by $\U_{\Box}^n$.

We give the corresponding positivity property for the reflected Cartesian
operator.  Let
$Q=\diag(q_{\jj})$ with $q_{\jj}\ge0$, let $\mu>0$, and set
$\mathcal A_h^{\Box}=\mu D_{\delta,h}^{\Box}-Q$.  For every $\Delta t>0$ and
$F\ge0$, the equation
\[
 (I-\Delta t\mathcal A_h^{\Box})V=F
\]
has a unique solution $V\ge0$.  The proof uses the same minimum-value argument as in Lemma~\ref{lem:discrete-maximum}, since all off-diagonal coefficients of
$D_{\delta,h}^{\Box}$ are nonnegative.  If $F\ne0$ and some component
$V_{\jj}=0$, then
\[
 0\le F_{\jj}
 =-\Delta t\,\mu
   \sum_{\kk\ne\jj}w_{\jj\kk}^{\delta,h}V_{\kk}
 \le0.
\]
Hence every cell coupled to $\jj$ by a positive coefficient also has value
zero.  When the nonlocal formula \eqref{eq:Dh} is used,
\eqref{eq:J-positive-ball} gives positive coefficients between successive cell
centers in each coordinate direction; the local formula \eqref{eq:local-Dh}
has the same nearest-neighbor property.  Repeating the preceding implication coordinate by coordinate reaches every
cell center and therefore gives $V\equiv0$.  Substitution into
$(I-\Delta t\mathcal A_h^{\Box})V=F$ would then give $F=0$, contradicting
$F\ne0$.  Hence $V\gg0$.

The same successive coordinate argument applied to the variation-of-constants
formula shows that
\[
 V(0)\ge0,\quad V(0)\ne0
 \quad\Longrightarrow\quad
 e^{t\mathcal A_h^{\Box}}V(0)\gg0
 \qquad(t>0).
\]
Therefore the scalar maximum-principle and strict-positivity steps used in
Section~\ref{sec:scheme} hold for the reflected Cartesian operator by a direct
coordinatewise proof.  Since $r$ and $s$ are strictly positive on
$\overline{\Omega_{\Box}}$, their values at every cell center are positive.
Hence
\[
R_h^{\Box}=\diag\big(r(x_{\jj})\big)_{\jj\in\mathcal I_h},
\qquad
S_h^{\Box}=\diag\big(s(x_{\jj})\big)_{\jj\in\mathcal I_h}
\]
have strictly positive diagonal entries, so positivity transfers between the
two stages exactly as in Corollary~\ref{cor:strong-positive-linear}.  Consequently the nonnegativity,
strong-positivity, principal-pair, and threshold arguments of
Section~\ref{sec:scheme} apply to the reflected Cartesian scheme for every
admissible $h$ and $\delta$.

\begin{assumption}[Smooth positive local eigenfunction on the box]
\label{ass:eigenfunction}
The local problem has a normalized eigenfunction $\PhiV^{0,\Box}\in C^4(\overline{\Omega_{\Box}})^2$ satisfying
\[
 \cL_0^{\Box}\PhiV^{0,\Box}=\lambda_0^{\Box}\PhiV^{0,\Box},
 \qquad
 \min_{x\in\overline{\Omega_{\Box}},\,i}\Phi_i^{0,\Box}(x)=m_\Phi>0,
\]
and each component has a coordinatewise even extension in $C^4(\R^d)$; equivalently, each component satisfies \eqref{eq:reflection-compatibility} on every face.
\end{assumption}

A useful sufficient condition for the third-order part of
Assumption~\ref{ass:eigenfunction} follows directly from the local eigenvalue
system.  Suppose, in addition to the Neumann condition, that
\[
 \partial_\ell H=\partial_\ell r=\partial_\ell s=\partial_\ell e=0
 \quad\text{on }x_\ell=0,L_\ell
\]
for every $\ell$.  Differentiate the two local eigenvalue equations in
$x_\ell$.  Since $\partial_\ell\Phi_i^{0,\Box}=0$ along the whole face, all of
its tangential derivatives vanish there.  Consequently
$\partial_\ell\Delta\Phi_i^{0,\Box}=\partial_\ell^3\Phi_i^{0,\Box}$ on that
face, and the differentiated equations give
$\partial_\ell^3\Phi_i^{0,\Box}=0$.  Thus reflection-symmetric coefficients
provide a directly verifiable route to the $C^4$ even-extension hypothesis.

Let $\uvec^{0,\Box}$ denote the classical solution of the local Neumann two-stage system on $\Omega_{\Box}$.

\begin{assumption}[Regular local solution on the box]
\label{ass:solution-reg}
For a fixed $T>0$,
\[
 u_i^{0,\Box}
 \in C^2([0,T];C(\overline{\Omega_{\Box}}))
 \cap C^1([0,T];C^4(\overline{\Omega_{\Box}})),
\]
and $E_Nu_i^{0,\Box}(t,\cdot)\in C^4(\R^d)$ uniformly for $0\le t\le T$; equivalently, \eqref{eq:reflection-compatibility} holds for $u_i^{0,\Box}(t,\cdot)$ on every face, uniformly in $t$.  The solution remains uniformly bounded and nonnegative on $[0,T]\times\overline{\Omega_{\Box}}$.
\end{assumption}

\begin{theorem}[Continuous threshold convergence on the box]
\label{thm:continuous-spectral-ac}
Under Assumptions~\ref{ass:coeff} and \ref{ass:eigenfunction},
\begin{equation}
\label{eq:continuous-spectral-ac}
 |\lambda_\delta^{\Box}-\lambda_0^{\Box}|
 \le C\delta^2.
\end{equation}
\end{theorem}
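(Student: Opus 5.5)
The plan is to use the local principal eigenfunction $\PhiV^{0,\Box}$ from Assumption~\ref{ass:eigenfunction} as a test function in the residual bound of Lemma~\ref{lem:residual-cont}, applied on the box with $\cL_\delta^{\Box}$ in place of $\cL_\delta$. That lemma rests on the generalized principal eigenvalue characterization \eqref{eq:CW-cont}. On the box, the reflected operator $\cD_\delta^{\Box}$ can be rewritten in the difference form \eqref{eq:general-nonlocal} with the reflected kernel
\[
 k_\delta(x,y)=\frac{2}{m_2\delta^{2+d}}\sum_{y'\in E^{-1}(y)}J\big((y'-x)/\delta\big),
\]
obtained by summing over the preimages of $y$ under the reflection. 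This kernel is symmetric in $(x,y)$, since $J$ is radial and reflections are isometries. It is also continuous, though only nonnegative rather than strictly positive.

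The first task is therefore to check that \eqref{eq:CW-cont}, and hence Lemma~\ref{lem:residual-cont}, still holds for $\cL_\delta^{\Box}$. I would argue this through the standing hypotheses of \cite{Onyido2024}. That reference requires a nonnegative symmetric continuous kernel that is positive near the diagonal, together with $r,s>0$. Both are assumed in Section~\ref{sec:cartesian}: $J>0$ on $B_{\rho_J}$ gives positivity near the diagonal, and $r,s>0$ on $\overline{\Omega_{\Box}}$ holds by hypothesis. This verification is the step I expect to be the main obstacle, because the general-habitat section assumed strict positivity of the kernel. If the citation does not cover it directly, I would prove only the inequalities that are actually needed, working from the cooperative, irreducible structure of $\cL_\delta^{\Box}$.

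\emph{Lower bound.} Suppose $\cL\PhiV\ge(\zeta-\eta)\PhiV$ with $\PhiV\gg0$. Then $e^{t\cL}\PhiV\ge e^{(\zeta-\eta)t}\PhiV$ by the positivity of the semigroup, and so the spectral radius of $e^{t\cL}$ is at least $e^{(\zeta-\eta)t}$.

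\emph{Upper bound.} Suppose $\cL\PhiV\le(\zeta+\eta)\PhiV$. Then $\norm{e^{t\cL}}_{\PhiV}\le e^{(\zeta+\eta)t}$ in the $\PhiV$-weighted sup norm, which gives $\lambda_\delta^{\Box}\le\zeta+\eta$ via the growth bound. Both bounds use the fact that the semigroup generated by the bounded operator $\cL_\delta^{\Box}$ is positive: its off-diagonal parts are positive integral operators and multiplications by $r,s$, and the diagonal parts are multiplications.

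Next, take $\zeta=\lambda_0^{\Box}$ and $\PhiV=\PhiV^{0,\Box}$. Using $\cL_0^{\Box}\PhiV^{0,\Box}=\lambda_0^{\Box}\PhiV^{0,\Box}$ and the fact that the reaction parts of $\cL_\delta^{\Box}$ and $\cL_0^{\Box}$ coincide, I get
\[
 (\cL_\delta^{\Box}\PhiV^{0,\Box})_i-\lambda_0^{\Box}\Phi_i^{0,\Box}
 =\mu_i\big(\cD_\delta^{\Box}-\Delta\big)\Phi_i^{0,\Box}.
\]
Lemma~\ref{lem:cont-local-consistency} applies because each $E_N\Phi_i^{0,\Box}$ lies in $C^4(\R^d)$. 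It bounds the right-hand side by $\mu_i\frac{m_4}{12m_2}\delta^2\norm{|D^4E_N\Phi_i^{0,\Box}|}_{L^\infty(\R^d)}$.

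Dividing by $\Phi_i^{0,\Box}\ge m_\Phi>0$ then gives
\[
 \eta(\PhiV^{0,\Box},\lambda_0^{\Box})\le C\delta^2,
 \qquad
 C=\frac{m_4\max_i\mu_i}{12m_2m_\Phi}\max_i\norm{|D^4E_N\Phi_i^{0,\Box}|}_{L^\infty(\R^d)}.
\]
Substituting this into \eqref{eq:residual-interval} yields $|\lambda_\delta^{\Box}-\lambda_0^{\Box}|\le C\delta^2$ for every $0<\delta\le\delta_{\Box}$. The case $\delta=0$ is trivial, which completes the argument.
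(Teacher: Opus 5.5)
Your proposal is correct and is essentially the paper's proof. It uses the same test function $\PhiV^{0,\Box}$ and the same residual bound from Lemma~\ref{lem:cont-local-consistency} divided by $m_\Phi$. It then runs the same positive-semigroup comparison $e^{(\lambda_0^{\Box}-C\delta^2)t}\PhiV^{0,\Box}\le T_\delta^{\Box}(t)\PhiV^{0,\Box}\le e^{(\lambda_0^{\Box}+C\delta^2)t}\PhiV^{0,\Box}$, and converts it to bounds on $\lambda_\delta^{\Box}$ via the spectral-radius formula and spectral mapping $\rho(T_\delta^{\Box}(t))=e^{t\lambda_\delta^{\Box}}$; you should state that spectral-mapping step explicitly for your lower bound. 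The paper never invokes \eqref{eq:CW-cont} or \cite{Onyido2024} for the box operator, so your direct semigroup argument is the whole proof, and the hypothesis check you flagged as the main obstacle is unnecessary.
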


\begin{theorem}[Discrete threshold convergence on the box]
\label{thm:discrete-spectral-ac}
Under the same assumptions,
\begin{equation}
\label{eq:spectral-ac}
 |\lambda_{\delta,h}^{\Box}-\lambda_0^{\Box}|
 \le C(\delta^2+h^2),
\end{equation}
where $C$ is independent of $h/\delta$.
\end{theorem}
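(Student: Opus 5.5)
The plan is to prove a discrete analogue of Lemma~\ref{lem:residual-cont} for the matrix $L_{\delta,h}^{\Box}$, and to use the restricted local eigenfunction as the approximate eigenvector. Since $L_{\delta,h}^{\Box}$ has nonnegative off-diagonal entries, its principal eigenvalue satisfies the finite-dimensional Collatz--Wielandt characterization. This is inherited from the principal pair \eqref{eq:discrete-principal-pair}, which transfers to the Cartesian setting as explained before Assumption~\ref{ass:eigenfunction}. Concretely, for every $\bm\Psi\gg0$,
\[
 \min_{\jj,i}\frac{(L_{\delta,h}^{\Box}\bm\Psi)_{i,\jj}}{\Psi_{i,\jj}}
 \le\lambda_{\delta,h}^{\Box}\le
 \max_{\jj,i}\frac{(L_{\delta,h}^{\Box}\bm\Psi)_{i,\jj}}{\Psi_{i,\jj}}.
\]
This follows by pairing with the strictly positive left principal eigenvector of $L_{\delta,h}^{\Box}$, which is the principal eigenvector of the transpose, again a matrix with nonnegative off-diagonal entries. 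As a consequence, for any $\zeta\in\R$,
\[
|\lambda_{\delta,h}^{\Box}-\zeta|\le\max_{i,\jj}\frac{|(L_{\delta,h}^{\Box}\bm\Psi-\zeta\bm\Psi)_{i,\jj}|}{\Psi_{i,\jj}}.
\]

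Choose $\bm\Psi=\cR_h\PhiV^{0,\Box}$ (componentwise) and $\zeta=\lambda_0^{\Box}$. Then $\Psi_{i,\jj}\ge m_\Phi>0$ by Assumption~\ref{ass:eigenfunction}. The coefficients are evaluated at cell centers, so the reaction part of $L_{\delta,h}^{\Box}\bm\Psi$ agrees exactly with $\cR_h$ applied to the reaction part of $\cL_0^{\Box}\PhiV^{0,\Box}$. Using $\cL_0^{\Box}\PhiV^{0,\Box}=\lambda_0^{\Box}\PhiV^{0,\Box}$, the residual therefore reduces to the dispersal consistency error:
\[
 (L_{\delta,h}^{\Box}\bm\Psi-\lambda_0^{\Box}\bm\Psi)_{i}
 =\mu_i\big(D_{\delta,h}^{\Box}\cR_h\Phi_i^{0,\Box}-\cR_h\Delta\Phi_i^{0,\Box}\big).
\]
Assumption~\ref{ass:eigenfunction} gives $E_N\Phi_i^{0,\Box}\in C^4(\R^d)$, so Theorem~\ref{thm:discrete-local-consistency} applies. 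It bounds the right-hand side by $\mu_iC_{\Phi}(\delta^2+h^2)$, and the constant $C_\Phi$ is independent of $h/\delta$. It covers all three regimes: $\delta<h$, the fallback to $D^{\Box}_{0,h}$, and the nonlocal formula. Dividing by $m_\Phi$ yields \eqref{eq:spectral-ac} with $C=\max(\mu_1,\mu_2)C_\Phi/m_\Phi$, uniformly in $h/\delta$.

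The main point to verify carefully is the discrete Collatz--Wielandt inequality, which is where a genuine argument is needed. Let $\bm\Theta\gg0$ be a left principal eigenvector, so $\bm\Theta^TL_{\delta,h}^{\Box}=\lambda_{\delta,h}^{\Box}\bm\Theta^T$. If $L_{\delta,h}^{\Box}\bm\Psi\le\beta\bm\Psi$ componentwise, pairing with $\bm\Theta$ gives $\lambda_{\delta,h}^{\Box}\bm\Theta^T\bm\Psi\le\beta\bm\Theta^T\bm\Psi$. Since $\bm\Theta^T\bm\Psi>0$, this yields $\lambda_{\delta,h}^{\Box}\le\beta$; the lower bound follows the same way. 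Existence of $\bm\Theta\gg0$ follows from applying the Perron--Frobenius argument preceding \eqref{eq:discrete-principal-pair} to $(L_{\delta,h}^{\Box})^T$. That argument works because $e^{t(L^{\Box}_{\delta,h})^T}=(e^{tL^{\Box}_{\delta,h}})^T$ is also strongly positive, and the spectral bound of the transpose is the same. Note that Theorem~\ref{thm:continuous-spectral-ac} is not needed here: the estimate compares directly with $\lambda_0^{\Box}$.
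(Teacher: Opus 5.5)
Your proof is correct, and its skeleton matches the paper's. You use the same test vector $\cR_h\PhiV^{0,\Box}$ and the same exact cancellation of the reaction terms, since coefficients are sampled at cell centers. The residual is reduced to the dispersal consistency error and bounded via Theorem~\ref{thm:discrete-local-consistency}, uniformly in $h/\delta$. Division by $m_\Phi$ then gives $(\lambda_0^{\Box}-C\varepsilon)\bm\Psi\le L_{\delta,h}^{\Box}\bm\Psi\le(\lambda_0^{\Box}+C\varepsilon)\bm\Psi$ with $\varepsilon=\delta^2+h^2$.

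The difference lies in how this sandwich is turned into bounds on $\lambda_{\delta,h}^{\Box}$. The paper pushes the inequalities through the positive semigroup $T_{\delta,h}^{\Box}(t)=e^{tL_{\delta,h}^{\Box}}$. It bounds $\norm{T_{\delta,h}^{\Box}(t)^n}_\infty$ above and below using $|X|\le(\norm{X}_\infty/m_\Phi)\bm\Psi$, and concludes with the spectral-radius formula and spectral mapping. You instead pair with a left Perron eigenvector $\bm\Theta$, which gives the Collatz--Wielandt bounds in one line.

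Your route is shorter and purely algebraic. It needs only $\bm\Theta\ge0$, $\bm\Theta\ne0$, which weak Perron--Frobenius applied to the nonnegative matrix $(L_{\delta,h}^{\Box}+cI)^T$ supplies. Since $\bm\Psi\gg0$ already forces $\bm\Theta^T\bm\Psi>0$, the strong positivity you invoke to get $\bm\Theta\gg0$ is more than required.

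The paper's route never uses an adjoint eigenvector, only positivity of the evolution and Gelfand's formula. That is why it is verbatim the argument used for Theorem~\ref{thm:continuous-spectral-ac}, where $\lambda_\delta^{\Box}$ need not be an eigenvalue and a positive left eigenvector is not readily available. Your pairing argument is tied to the finite-dimensional setting, which is all this theorem requires.
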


\begin{corollary}[Approximation of the reflected continuous threshold]
\label{cor:nonlocal-threshold-approx}
For all sufficiently small $h$ and $\delta$,
\[
 |\lambda_{\delta,h}^{\Box}-\lambda_\delta^{\Box}|
 \le C(h^2+\delta^2),
\]
with $C$ independent of $h/\delta$.
\end{corollary}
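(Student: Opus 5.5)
The plan is to combine the two threshold estimates already stated through the triangle inequality, anchored at the local Neumann threshold $\lambda_0^{\Box}$:
\[
 |\lambda_{\delta,h}^{\Box}-\lambda_\delta^{\Box}|
 \le
 |\lambda_{\delta,h}^{\Box}-\lambda_0^{\Box}|
 +|\lambda_0^{\Box}-\lambda_\delta^{\Box}|.
\]
Theorem~\ref{thm:discrete-spectral-ac} bounds the first term by $C_1(\delta^2+h^2)$, with $C_1$ independent of $h/\delta$. Theorem~\ref{thm:continuous-spectral-ac} bounds the second term by $C_2\delta^2$, and this bound does not involve $h$ at all. Adding the two gives the claim with $C=C_1+C_2$, which is again independent of $h/\delta$.

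The main thing to check is that the two theorems hold on a common parameter range. This is what the qualifier ``sufficiently small $h$ and $\delta$'' is for. Both theorems are stated for $0<\delta\le\delta_{\Box}$ under Assumptions~\ref{ass:coeff} and~\ref{ass:eigenfunction}. Theorem~\ref{thm:discrete-spectral-ac} additionally requires $h$ to divide each $L_\ell$, and it covers both regimes of $D_{\delta,h}^{\Box}$: the nonlocal stencil \eqref{eq:Dh} and the fallback to $D_{0,h}^{\Box}$. I would restrict to $\delta\le\delta_{\Box}$, to admissible $h$, and to whatever smallness thresholds the proofs of those two theorems impose. Such thresholds are typical of perturbation arguments, for example residual bounds built from the positive eigenfunction $\PhiV^{0,\Box}$ with $m_\Phi>0$ through Lemma~\ref{lem:residual-cont} and its discrete analogue. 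On the intersection of these ranges both bounds hold simultaneously.

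It is worth noting what the triangle-inequality route buys. A direct fixed-$\delta$ comparison via Proposition~\ref{prop:fixed-delta-consistency} would only give convergence as $h\to0$, with constants that depend on $\delta$ and degenerate as $\delta\to0$. Passing through the local limit avoids this, because both constituent estimates are uniform in the ratio $h/\delta$.
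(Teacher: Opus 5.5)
Your proposal is correct and matches the paper's proof: the paper applies the triangle inequality through $\lambda_0^{\Box}$ and combines Theorems~\ref{thm:discrete-spectral-ac} and~\ref{thm:continuous-spectral-ac}, so the constant is independent of $h/\delta$ just as you argue. Your speculation about Lemma~\ref{lem:residual-cont} is beside the point, since those two theorems are proved by comparing the positive semigroup against the eigenfunction and applying the spectral-radius formula, but this does not affect your argument.
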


\begin{corollary}[Correct local threshold sign]
\label{cor:eventual-sign}
If $\lambda_0^{\Box}\ne0$, then $\operatorname{sign}\lambda_{\delta,h}^{\Box}=\operatorname{sign}\lambda_0^{\Box}$ for all sufficiently small $h$ and $\delta$, independently of their relative rate of decay.
\end{corollary}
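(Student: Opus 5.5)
This corollary is a direct consequence of Theorem~\ref{thm:discrete-spectral-ac}; no new spectral analysis is needed. Take $C$ to be the constant in \eqref{eq:spectral-ac}, which is independent of $h/\delta$. Suppose $\lambda_0^{\Box}\ne0$, and choose $\epsilon_0>0$ such that
\[
 C\,\epsilon_0^2<\tfrac12|\lambda_0^{\Box}|,
 \qquad
 \epsilon_0\le\delta_{\Box}.
\]
Now let $0<h\le\epsilon_0$ and $0\le\delta\le\epsilon_0$, with $h$ admissible, i.e.\ $L_\ell=N_\ell h$. In every such case $\lambda_{\delta,h}^{\Box}$ is defined, whichever formula is in force: \eqref{eq:local-Dh} when $\delta<h$ or $h/\delta>\rho_J$, and \eqref{eq:Dh} otherwise. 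Theorem~\ref{thm:discrete-spectral-ac} then gives
\[
 |\lambda_{\delta,h}^{\Box}-\lambda_0^{\Box}|
 \le C(\delta^2+h^2)
 \le 2C\epsilon_0^2
 <|\lambda_0^{\Box}|.
\]
Hence $\lambda_{\delta,h}^{\Box}$ lies in the open interval centered at $\lambda_0^{\Box}$ of radius $|\lambda_0^{\Box}|$. That interval does not contain $0$, so $\lambda_{\delta,h}^{\Box}$ has the same sign as $\lambda_0^{\Box}$.

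The only point that needs care is the clause ``independently of their relative rate of decay.'' The smallness condition above is imposed on $h$ and $\delta$ separately, through the single threshold $\epsilon_0$. It places no restriction on the ratio $h/\delta$, and in particular it covers the regime $\delta<h$, where the local stencil is used. This works precisely because the constant in Theorem~\ref{thm:discrete-spectral-ac} is uniform in $h/\delta$ across all the operator cases of \eqref{eq:Dh}--\eqref{eq:local-Dh}. So any joint passage $(h,\delta)\to(0,0)$ eventually enters the square $[0,\epsilon_0]^2$ and stays there, and along it the sign is preserved. I do not expect a genuine obstacle here: the substantive work lies in Theorem~\ref{thm:discrete-spectral-ac}, and the corollary is the triangle-inequality argument just given.

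For completeness, Theorem~\ref{thm:threshold-equivalence} (which transfers to the Cartesian scheme, as noted above) then gives the corresponding statement for the fully discrete scheme. For every $\Delta t>0$, the sign of $\rho(G^{\Box}_{\delta,h}(\Delta t))-1$ equals $\operatorname{sign}\lambda_0^{\Box}$, where $G^{\Box}_{\delta,h}(\Delta t)$ is the Cartesian analogue of \eqref{eq:linearized-step}. This holds for all sufficiently small $h$ and $\delta$, again with no coupling condition between them.
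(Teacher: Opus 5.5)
Your proof is correct and matches the paper's: it uses the $h/\delta$-uniform bound of Theorem~\ref{thm:discrete-spectral-ac} and takes $h,\delta$ small enough that $C(h^2+\delta^2)$ falls below $|\lambda_0^{\Box}|$, so $\lambda_{\delta,h}^{\Box}$ cannot cross zero (the paper states the condition as $C(h^2+\delta^2)<\tfrac12|\lambda_0^{\Box}|$). Imposing the smallness through the square $[0,\epsilon_0]^2$ makes the independence from the relative decay rate explicit, and your closing remark on the fully discrete scheme is a valid addition, though not needed for the statement.
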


\begin{theorem}[Finite-time convergence to the local box solution]
\label{thm:nonlinear-ac}
Let $\U_{\Box}^0=\cR_h\uvec^{0,\Box}(0)$.  Under Assumptions~\ref{ass:coeff} and \ref{ass:solution-reg}, there exist $h_\star,\delta_\star,\Delta t_\star>0$ and $C_T>0$ such that
\begin{equation}
\label{eq:nonlinear-error}
 \max_{0\le n\Delta t\le T}
 \norm{\U_{\Box}^n-\cR_h\uvec^{0,\Box}(t_n)}_\infty
 \le
 C_T(\Delta t+h^2+\delta^2).
\end{equation}
The constant is independent of the ratio $h/\delta$.
\end{theorem}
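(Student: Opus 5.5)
The plan is a standard consistency-plus-stability argument in the maximum norm, using the nonnegativity and comparison structure of the semi-implicit step. Set $\bm\Psi^n=\cR_h\uvec^{0,\Box}(t_n)$ and $\bm E^n=\U_{\Box}^n-\bm\Psi^n$, so that $\bm E^0=0$.

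\emph{Step 1: truncation error.} Insert $\bm\Psi^n$ into \eqref{eq:scheme}, with the box operators replacing $A_{\delta,h}$, $B_h$, $Q_h$. This defines a residual $\bm\tau^n$ through
\[
 \big[I-\Delta t(A^{\Box}_{\delta,h}-Q^{\Box}_h(\bm\Psi^n))\big]\bm\Psi^{n+1}
 =(I+\Delta tB^{\Box}_h)\bm\Psi^n+\Delta t\,\bm\tau^n .
\]
Write $\bm\Psi^{n+1}-\bm\Psi^n=\Delta t\,\partial_t\uvec(t_{n+1})+O(\Delta t^2)$. This uses the $C^2([0,T];C)$ regularity in Assumption~\ref{ass:solution-reg}. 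Then subtract the local PDE evaluated at the cell centers at $t_{n+1}$. The remaining terms are of three kinds. The first is the dispersal defect $D^{\Box}_{\delta,h}\cR_hu_i(t_{n+1})-\cR_h\Delta u_i(t_{n+1})$, which is $O(\delta^2+h^2)$ by Theorem~\ref{thm:discrete-local-consistency}. The constant in that theorem is uniform in $t$, because $E_Nu_i(t)$ is bounded in $C^4$ uniformly in $t$, and it is independent of $h/\delta$. The second is the lagging of $Q_h$ and $B_h$ by one time level, which is $O(\Delta t)$ by the $C^1$ time regularity. The third is the coefficient evaluation at cell centers, which is exact under $\cR_h$. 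Altogether $\norm{\bm\tau^n}_\infty\le C(\Delta t+h^2+\delta^2)$.

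\emph{Step 2: error recursion.} Subtract the two equations and write $M_n=I-\Delta t(A^{\Box}_{\delta,h}-Q^{\Box}_h(\U^n_\Box))$. This gives
\[
 M_n\bm E^{n+1}=(I+\Delta tB^{\Box}_h)\bm E^n-\Delta t\big(Q^{\Box}_h(\U^n_\Box)-Q^{\Box}_h(\bm\Psi^n)\big)\bm\Psi^{n+1}-\Delta t\,\bm\tau^n .
\]
The key tool is the maximum-norm bound $\norm{M_n^{-1}}_\infty\le1$. It holds because $M_n$ is an M-matrix: its diagonal dominates, since $D^{\Box}_{\delta,h}\one=0$, its off-diagonal entries are nonnegative, and $Q^{\Box}_h(\U^n_\Box)\ge0$ by Theorem~\ref{thm:nonnegativity} transferred to the box. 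Compare $M_n$ with $\one$ in the minimum-value argument of Lemma~\ref{lem:discrete-maximum}; this works uniformly in $h$, $\delta$ and $\Delta t$. Next, $Q_h$ is linear in $\U$ and $\bm\Psi$ is bounded, so the middle term is bounded by $C\Delta t\norm{\bm E^n}_\infty$. Hence
\[
 \norm{\bm E^{n+1}}_\infty\le(1+C\Delta t)\norm{\bm E^n}_\infty+C\Delta t(\Delta t+h^2+\delta^2).
\]
The discrete Gronwall inequality then gives \eqref{eq:nonlinear-error}.

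\emph{Expected obstacle.} The main point to check is that no constant depends on a bound for $\U^n_\Box$ beyond what is available. This looks delicate, but writing the recursion as above avoids it: $\U^n_\Box$ enters only through $Q_h(\U^n_\Box)\ge0$ inside $M_n$, where only nonnegativity matters. A second point is that the switch between \eqref{eq:Dh} and \eqref{eq:local-Dh}, at $h/\delta>\rho_J$ or $\delta<h$, must not spoil the estimates. It does not: the consistency bound of Theorem~\ref{thm:discrete-local-consistency} already covers both regimes, and the M-matrix bound holds for both formulas by Proposition~\ref{prop:matrix-structure}. The thresholds $h_\star,\delta_\star,\Delta t_\star$ are needed only so that $0<\delta\le\delta_{\Box}$ and the Taylor remainders apply.
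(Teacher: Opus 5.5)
Your proposal is correct and is essentially the paper's proof: your Step~1 is the paper's Lemma~\ref{lem:local-residual}, and your Step~2 uses the same ingredients. These are the same error equation with $Q_h^{\Box}(\U_{\Box}^n)$ kept inside the implicit matrix, the bound $\norm{C_n^{-1}}_\infty\le1$ from $C_n\one\ge\one$ and the maximum principle, linearity of $Q_h^{\Box}$, and discrete Gronwall. Your observation that $\U_{\Box}^n$ enters only through the nonnegativity of $Q_h^{\Box}(\U_{\Box}^n)$ is also why the paper's argument needs no a priori bound on the numerical solution.
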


\section{Numerical experiments}
\label{sec:numerics}

\subsection{Smooth nonrectangular habitat}

To illustrate the habitat-conforming construction, consider the smooth elliptical habitat
\[
 \Omega_{\mathrm{ell}}
 =
 \left\{(x,y)\in\R^2:
 \frac{x^2}{1.2^2}+\frac{y^2}{0.75^2}<1\right\}.
\]
We divide $0\le r\le1$ into six radial intervals and $0\le\theta<2\pi$ into twenty-four equal angular intervals, yielding $144$ cells after $(r,\theta)\mapsto(1.2r\cos\theta,0.75r\sin\theta)$.  Let
\[
 r_k=\frac{k}{6},\qquad \theta_\ell=\frac{2\pi(\ell+\tfrac12)}{24},\qquad
 \bar r_k=\frac{2}{3}\frac{r_{k+1}^3-r_k^3}{r_{k+1}^2-r_k^2}.
\]
Then the representative point and cell area are
\[
 x_{k,\ell}=(1.2\bar r_k\cos\theta_\ell,0.75\bar r_k\sin\theta_\ell),
 \qquad
 m_{k,\ell}=(1.2)(0.75)\frac{\pi}{24}(r_{k+1}^2-r_k^2).
\]
Thus $N_h=144$.  For each pair of cells $K_i$ and $K_j$, we approximate
\[
 \int_{K_i}\int_{K_j}\kappa_\delta(x,y)\,dy\,dx
 \approx
 |K_i||K_j|\,\kappa_\delta(x_i,x_j)
 =
 m_im_j\kappa_\delta(x_i,x_j),
\]
where $x_i\in K_i$ and $x_j\in K_j$ are the representative points.  Since $\kappa_\delta(x_i,x_j)=\kappa_\delta(x_j,x_i)$, the approximation satisfies $\alpha_{ij}^{\delta,h}=\alpha_{ji}^{\delta,h}$ up to roundoff.  Consequently,
\[
 m_iw_{ij}^{\delta,h}=m_jw_{ji}^{\delta,h},
 \qquad
 D_{\delta,h}\one=0,
 \qquad
 \mathbf m_h^TD_{\delta,h}=0
\]
up to roundoff.

We use
\[
 \kappa_\delta(x,y)
 =
 \frac1{\pi\delta^2}
 \exp\!\left(-\frac{|x-y|^2}{\delta^2}\right),
 \qquad
 \delta=0.35,
\]
and
\[
\begin{aligned}
 a(x,y)&=0.35+0.04x,
 &s(x,y)&=1.10+0.18y^2,\\
 r_\gamma(x,y)&=\gamma(1.40-0.18x),
 &e(x,y)&=0.72+0.06y,
\end{aligned}
\]
with $\mu_1=0.4$ and $\mu_2=1$.  For this experiment, the coefficient cell averages are evaluated by the same representative-point rule.  In double precision the computed structural residuals are
\[
 \norm{D_{\delta,h}\one}_\infty
 =5.6\times10^{-16},
 \qquad
 \norm{\mathbf m_h^TD_{\delta,h}}_\infty
 =6.9\times10^{-18}.
\]
Table~\ref{tab:ellipse-threshold} shows the discrete principal eigenvalue as the reproduction coefficient is scaled by $\gamma$.  The sign change occurs without any rectangular geometry or even-reflection construction.

\begin{table}[htbp]
\centering
\caption{Discrete threshold on the smooth elliptical habitat.}
\label{tab:ellipse-threshold}
\begin{tabular}{cc}
\toprule
$\gamma$ & $\lambda_{\delta,h}$\\
\midrule
0.45 & $-0.152231$\\
0.65 & $\phantom{-}0.012332$\\
1.00 & $\phantom{-}0.253676$\\
\bottomrule
\end{tabular}
\end{table}

\subsection{Rectangular habitat and local-limit convergence}

For the reflected Cartesian experiments, take the one-dimensional rectangular habitat
\[
 \Omega_{\Box}=(0,1),
\]
the compact kernel $J(z)=(1-z^2)_+$, and cell-centered reflection.  The heterogeneous coefficients are
\[
\begin{aligned}
 a(x)&=0.35+0.05\cos(2\pi x),
 &s(x)&=1.10+0.25\cos(2\pi x),\\
 r(x)&=1.55-0.50\cos(2\pi x),
 &e(x)&=0.72+0.10\cos(2\pi x),
\end{aligned}
\]
with $\mu_1=0.4$ and $\mu_2=1$.  Unless stated otherwise,
\[
 b(x)=0.8+0.1\cos(2\pi x),\qquad
 f(x)=0.7+0.1\sin^2(2\pi x),
\]
and $c=0.25$, $g=0.20$.  For the time-dependent experiments we take $\tau=0$ and use
\[
 u_1^0(x)=0.15+0.05\cos(2\pi x),\qquad
 u_2^0(x)=0.10+0.03\sin^2(2\pi x).
\]
These coefficient and initial-data profiles extend smoothly by even reflection
across $x=0$ and $x=1$.  Their first and third derivatives vanish at both
endpoints, so the prescribed profiles satisfy
\eqref{eq:reflection-compatibility}.  The coefficient profiles also satisfy
the sufficient reflection-symmetry condition stated after
Assumption~\ref{ass:eigenfunction}.

Figure~\ref{fig:spectral-ac} compares the reflected discrete nonlocal threshold with the discrete local threshold on a fixed fine grid.  The observed log--log slope is approximately $2.08$, consistent with the predicted second-order local-limit behavior on a sufficiently fine fixed mesh.  The plotted error measures convergence of $\lambda_{\delta,h}^{\Box}$ to the discrete local threshold $\lambda_{0,h}^{\Box}$ on the fixed grid.

\begin{figure}[htbp]
\centering
\includegraphics[width=.64\textwidth]{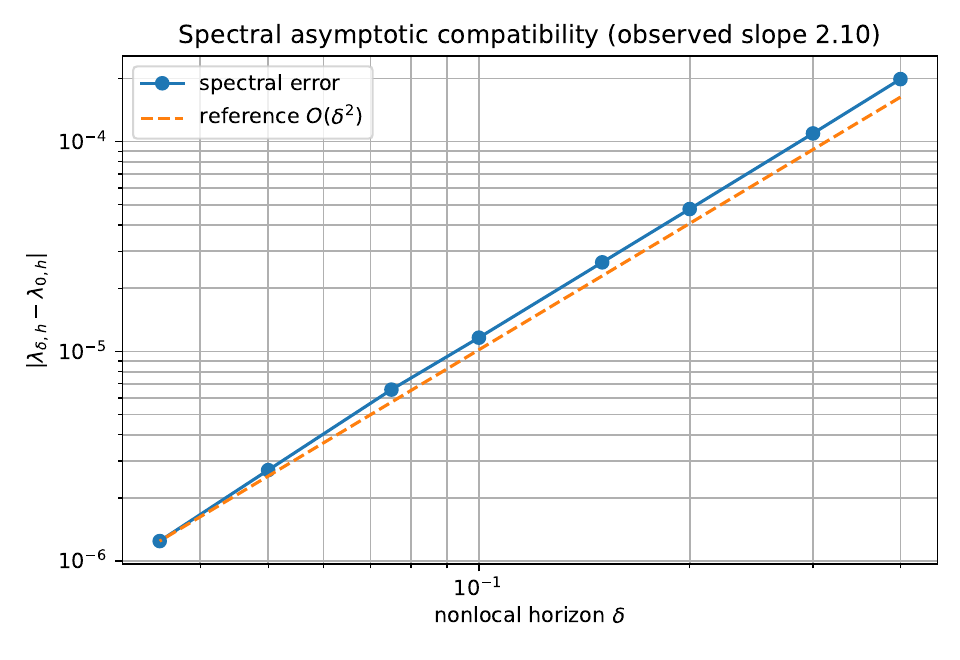}
\caption{Convergence of the reflected nonlocal discrete threshold to the local discrete threshold as $\delta\downarrow0$.  The mesh has $N=140$ cell centers.}
\label{fig:spectral-ac}
\end{figure}

\begin{table}[htbp]
\centering
\caption{Representative convergence data for the reflected Cartesian threshold.}
\label{tab:spectral}
\begin{tabular}{ccc}
\toprule
$\delta$ & $\lambda_{\delta,h}^{\Box}$ &
$|\lambda_{\delta,h}^{\Box}-\lambda_{0,h}^{\Box}|$\\
\midrule
0.400 & 0.269547 & $1.98\times10^{-4}$\\
0.200 & 0.269698 & $4.77\times10^{-5}$\\
0.100 & 0.269734 & $1.16\times10^{-5}$\\
0.050 & 0.269743 & $2.72\times10^{-6}$\\
0.035 & 0.269745 & $1.25\times10^{-6}$\\
\bottomrule
\end{tabular}
\end{table}

\subsection{Rectangular threshold dynamics and nonnegativity}

We use $N=80$, $\delta=0.15$, $\Delta t=0.02$, and integrate to $t=30$.
Multiplying $r$ by $0.45$ gives $\lambda_{\delta,h}^{\Box}=-0.1361$; the mean
total density decreases to $2.57\times10^{-3}$ at $t=30$.  With the unscaled
$r$, $\lambda_{\delta,h}^{\Box}=0.2697$, and the computed trajectory approaches
a positive fixed point.  By Proposition~\ref{prop:fixed-points}, such a fixed
point is a semidiscrete equilibrium, while Proposition~\ref{prop:positive-threshold}
gives uniqueness of the positive semidiscrete equilibrium when $\tau=0$.
All computed components remain nonnegative throughout both runs.

\begin{figure}[htbp]
\centering
\includegraphics[width=.62\textwidth]{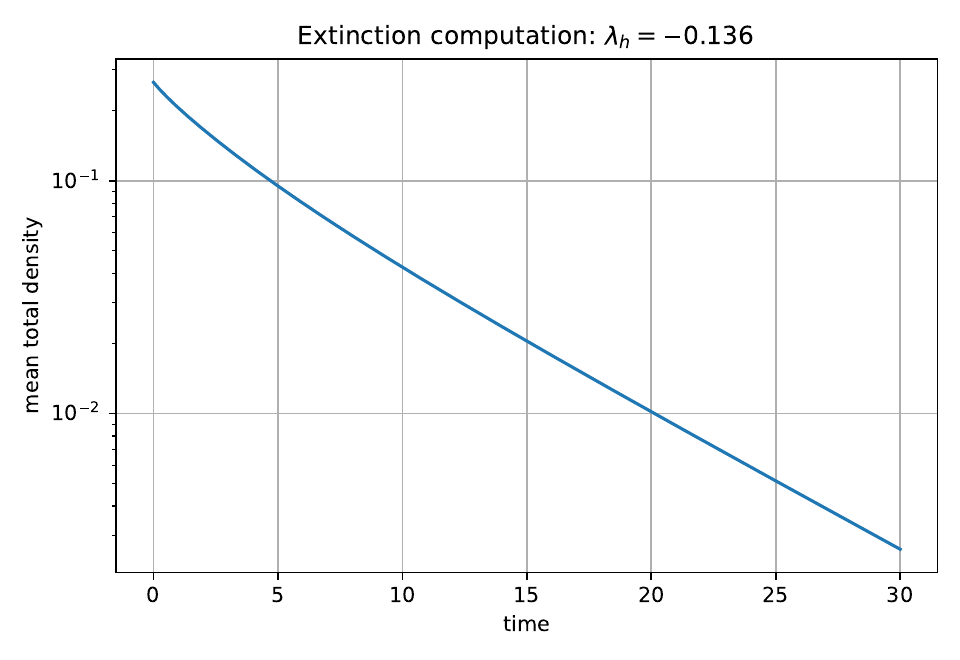}
\caption{Extinction computation on the rectangular habitat when the reflected discrete threshold is negative.}
\label{fig:extinction}
\end{figure}

\begin{figure}[htbp]
\centering
\includegraphics[width=.62\textwidth]{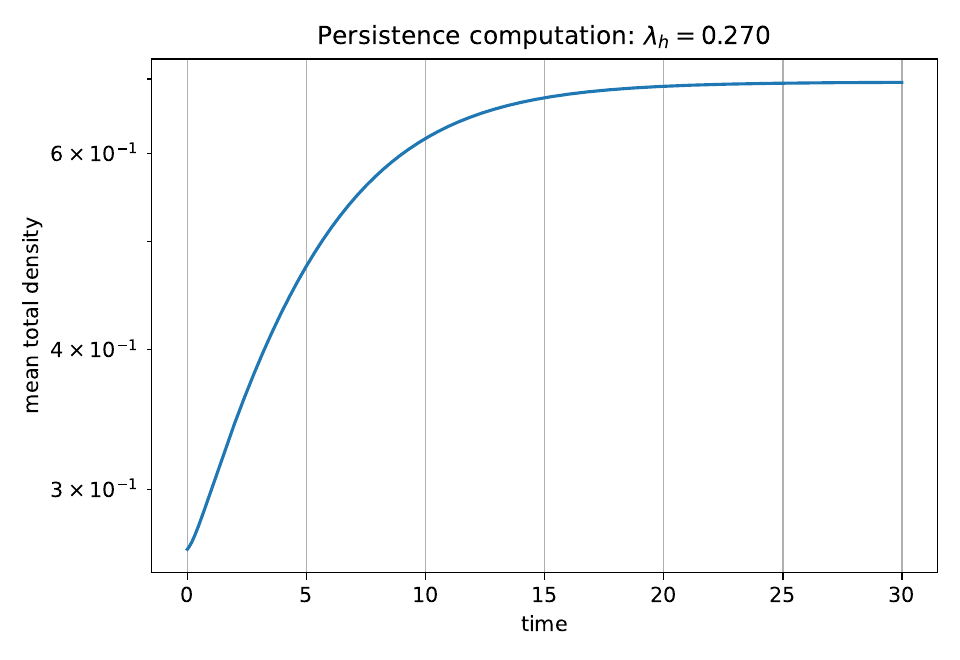}
\caption{Positive long-time profile on the rectangular habitat when the reflected discrete threshold is positive and $\tau=0$.}
\label{fig:persistence}
\end{figure}

\section{Proofs of the threshold and convergence results}
\label{sec:proof-main}

\subsection{Habitat-conforming scheme}

\begin{proof}[Proof of Lemma~\ref{lem:discrete-maximum}]
We use a minimum-value argument; compare \cite{EymardHandlovicovaMikula2011}.  Set $C_{\Delta t}=I-\Delta t\mathcal A_h=I-\Delta t(\mu D_{\delta,h}-Q)$.  Suppose $C_{\Delta t}V=F\ge0$.  If $V_k=\min_{1\le i\le N_h}V_i<0$, then
\[
 (D_{\delta,h}V)_k
 =\sum_{j\ne k}w_{kj}^{\delta,h}(V_j-V_k)\ge0,
\]
and therefore
\[
 F_k
 =(1+\Delta t q_k)V_k
 -\Delta t\mu(D_{\delta,h}V)_k<0,
\]
a contradiction.  Hence $V\ge0$.  If $F=0$, applying the same argument
to $-V$ gives $V\le0$, and therefore $V=0$.  Thus $\ker C_{\Delta t}=\{0\}$.  Since $C_{\Delta t}\in\R^{N_h\times N_h}$, rank--nullity gives $\operatorname{rank}C_{\Delta t}=N_h-\dim\ker C_{\Delta t}=N_h$, and hence $C_{\Delta t}$ is invertible.

If $F\ne0$ and $V_k=0$ for some $k$, then $V_k=\min_iV_i$ and
\[
 0\le F_k
 =-\Delta t\mu\sum_{j\ne k}w_{kj}^{\delta,h}V_j\le0.
\]
Since $w_{kj}^{\delta,h}>0$ for every $j\ne k$, it follows that
$V_j=0$ for all $j$, hence $V=0$ and therefore $F=0$, a contradiction.
Thus $V\gg0$ whenever $F\ne0$.
\end{proof}

\begin{proof}[Proof of Corollary~\ref{cor:strong-positive-linear}]
Each component of $V'=\mathcal A_hV$ satisfies
\[
 V_i'(t)
 +\left(q_i+\mu\sum_{j\ne i}w_{ij}^{\delta,h}\right)V_i(t)
 =\mu\sum_{j\ne i}w_{ij}^{\delta,h}V_j(t).
\]
Set $a_i=q_i+\mu\sum_{j\ne i}w_{ij}^{\delta,h}$.  Then
\[
 V_i(t)
 =
 e^{-a_it}V_i(0)
 +
 \mu\int_0^t
 e^{-a_i(t-s)}
 \sum_{j\ne i}w_{ij}^{\delta,h}V_j(s)\,ds.
\]
Hence $V(0)\ge0$ implies $V(t)\ge0$ for $t\ge0$.  If $V_k(0)>0$ for some $k$, then $V_k(t)\ge e^{-a_kt}V_k(0)>0$, and, for $i\ne k$,
\[
 V_i(t)
 \ge
 \mu w_{ik}^{\delta,h}
 \int_0^t e^{-a_i(t-s)}V_k(s)\,ds
 >0.
\]
Thus $V(t)\gg0$ for every $t>0$.
For the two-stage system, this argument first spreads positivity within
any nonzero stage.  Since $R_h\ne0$ and $S_h\ne0$, a positive stage then
provides a nonzero nonnegative forcing to the other stage, where the same
argument spreads positivity throughout the habitat.  This proves
\eqref{eq:strong-positive-linear}.
\end{proof}

\begin{proof}[Proof of Theorem~\ref{thm:nonnegativity}]
The right-hand side of \eqref{eq:scheme} satisfies $F^n:=(I+\Delta t B_h)\U^n\ge0$.
The left-hand side is block diagonal, and each diagonal block has the form
covered by Lemma~\ref{lem:discrete-maximum}, with a nonnegative diagonal
coefficient.  Hence \eqref{eq:scheme} has a unique solution and
$\U^{n+1}\ge0$.

If, for example, $U_1^n\ne0$, then the first-stage component of $F^n$ is
nonzero, so Lemma~\ref{lem:discrete-maximum} gives $U_1^{n+1}\gg0$.
Because $S_h\ne0$, $S_hU_1^{n+1}\ne0$, and the second-stage right-hand side at the next step is nonzero; hence
$U_2^{n+2}\gg0$.  Since $U_1^{n+1}\gg0$, the first-stage right-hand side
at the next step is also nonzero, and therefore $U_1^{n+2}\gg0$.  The
argument is identical when the initially nonzero component is $U_2^n$.
Thus $\U^n\ne0$ implies $U_1^{n+2}\gg0$ and $U_2^{n+2}\gg0$.  If both $U_1^n\ne0$ and $U_2^n\ne0$, then both stage components of $F^n$ are nonzero, so Lemma~\ref{lem:discrete-maximum} gives $U_1^{n+1}\gg0$ and $U_2^{n+1}\gg0$.
\end{proof}

\begin{proof}[Proof of Proposition~\ref{prop:fixed-points}]
If $\U^{n+1}=\U^n=\U^*$ in \eqref{eq:scheme}, cancellation of $\U^*$ and division by $\Delta t$ give
\[
 \big(A_{\delta,h}+B_h-Q_h(\U^*)\big)\U^*=0.
\]
The converse follows by reversing the calculation.
\end{proof}

\begin{proof}[Proof of Lemma~\ref{lem:finite-bound}]
Set
\[
 C_0=I-\Delta tA_{\delta,h},
 \qquad
 C_n=I-\Delta t\big(A_{\delta,h}-Q_h(\U^n)\big),
\]
and let
\[
 F^n=(I+\Delta tB_h)\U^n\ge0.
\]
Then
\[
 C_n\U^{n+1}=F^n,
 \qquad
 C_0G_{\delta,h}\U^n=F^n.
\]
By Theorem~\ref{thm:nonnegativity}, $\U^{n+1}\ge0$, and
\[
 C_0\big(G_{\delta,h}\U^n-\U^{n+1}\big)
 =\Delta t\,Q_h(\U^n)\U^{n+1}\ge0.
\]
The discrete maximum principle in Lemma~\ref{lem:discrete-maximum} therefore
gives
\[
 0\le\U^{n+1}\le G_{\delta,h}\U^n,
\]
which is \eqref{eq:comparison-G}.

Furthermore, $C_0\one\ge\one$.  If $Z=C_0^{-1}\one$, then $C_0(\one-Z)\ge0$, so the same maximum principle yields $0\le Z\le\one$.
For any vector $F$,
\[
 |C_0^{-1}F|
 \le C_0^{-1}|F|
 \le \norm{F}_\infty C_0^{-1}\one
 \le \norm{F}_\infty\one,
\]
and hence
\[
 \norm{C_0^{-1}}_\infty\le1.
\]
If $\beta=\max\{\norm{r}_\infty,\norm{s}_\infty\}$, then
\[
 \norm{I+\Delta tB_h}_\infty
 \le1+\beta\Delta t,
 \qquad
 \norm{G_{\delta,h}^n}_\infty
 \le(1+\beta\Delta t)^n
 \le e^{\beta T}
\]
for $n\Delta t\le T$.
\end{proof}

\begin{proof}[Proof of Theorem~\ref{thm:threshold-equivalence}]
Let $C_{\Delta t}=I-\Delta tA_{\delta,h}$.  By Lemma~\ref{lem:discrete-maximum}, $C_{\Delta t}^{-1}$ preserves
nonnegativity.  From \eqref{eq:discrete-principal-pair},
\[
 L_{\delta,h}\PhiV_h=\lambda_{\delta,h}\PhiV_h,
 \qquad \PhiV_h\gg0.
\]
Using $L_{\delta,h}=A_{\delta,h}+B_h$ and
\eqref{eq:linearized-step},
\begin{align*}
 G_{\delta,h}\PhiV_h-\PhiV_h
 &=C_{\Delta t}^{-1}
   \big[(I+\Delta tB_h)-C_{\Delta t}\big]\PhiV_h =\Delta t\,C_{\Delta t}^{-1}L_{\delta,h}\PhiV_h =\Delta t\,\lambda_{\delta,h}
   C_{\Delta t}^{-1}\PhiV_h.
\end{align*}
Set $W_h=C_{\Delta t}^{-1}\PhiV_h$.  Since $\PhiV_h\gg0$,
Lemma~\ref{lem:discrete-maximum} gives $W_h\gg0$.  Set
$m_h=\min_p(\PhiV_h)_p>0$.  Then every $X\in\mathbb R^{2N_h}$ satisfies
\[
 |X|\le \frac{\norm{X}_\infty}{m_h}\PhiV_h.
\]

If $\lambda_{\delta,h}>0$, then
\[
 G_{\delta,h}\PhiV_h
 \ge c_+\PhiV_h,
 \qquad
 c_+
 =1+\Delta t\,\lambda_{\delta,h}
   \min_p\frac{(W_h)_p}{(\PhiV_h)_p}>1.
\]
Because $G_{\delta,h}$ preserves order,
\[
 G_{\delta,h}^n\PhiV_h\ge c_+^n\PhiV_h,
\]
and hence $\rho(G_{\delta,h})\ge c_+>1$.

If $\lambda_{\delta,h}<0$, then $G_{\delta,h}\PhiV_h\ge0$ and
\[
 G_{\delta,h}\PhiV_h
 \le c_-\PhiV_h,
 \qquad
 c_-:=\max_p
 \frac{(G_{\delta,h}\PhiV_h)_p}{(\PhiV_h)_p}<1.
\]

For every $X\in\mathbb R^{2N_h}$,
\[
 |G_{\delta,h}^nX|
 \le G_{\delta,h}^n|X|
 \le
 \frac{\norm{X}_\infty}{m_h}c_-^n\PhiV_h.
\]
Consequently,
\[
 \norm{G_{\delta,h}^n}_\infty
 \le
 \frac{\norm{\PhiV_h}_\infty}{m_h}c_-^n,
\]
and the spectral-radius formula gives $\rho(G_{\delta,h})\le c_-<1$.

Finally, if $\lambda_{\delta,h}=0$, then
$G_{\delta,h}\PhiV_h=\PhiV_h$, so $1$ is an eigenvalue of
$G_{\delta,h}$. For any $X\in\mathbb R^{2N_h}$, positivity gives
\[
 |G_{\delta,h}^nX|
 \le
 G_{\delta,h}^n|X|
 \le
 \frac{\norm{X}_\infty}{m_h}G_{\delta,h}^n\PhiV_h
 =
 \frac{\norm{X}_\infty}{m_h}\PhiV_h.
\]
Thus
\[
 \norm{G_{\delta,h}^n}_\infty
 \le
 \frac{\norm{\PhiV_h}_\infty}{m_h}
 \qquad(n\ge1),
\]
so the spectral-radius formula gives $\rho(G_{\delta,h})\le1$.
Since $1$ is an eigenvalue, $\rho(G_{\delta,h})=1$.
\end{proof}

\begin{proof}[Proof of Corollary~\ref{cor:extinction}]
Lemma~\ref{lem:finite-bound} gives
\[
 0\le\U^n\le G_{\delta,h}^n\U^0.
\]
If $\lambda_{\delta,h}<0$, the proof of
Theorem~\ref{thm:threshold-equivalence} gives
\[
 G_{\delta,h}\PhiV_h\le c_-\PhiV_h,
 \qquad 0<c_-<1.
\]
Set $m_\Phi:=\min_p(\PhiV_h)_p>0$.  Since $\U^0\le(\norm{\U^0}_\infty/m_\Phi)\PhiV_h$, positivity of $G_{\delta,h}$ yields
\[
 0\le\U^n
 \le
 \frac{\norm{\U^0}_\infty}{m_\Phi}
 G_{\delta,h}^n\PhiV_h
 \le
 \frac{\norm{\U^0}_\infty}{m_\Phi}
 c_-^n\PhiV_h.
\]
Hence
\[
 \norm{\U^n}_\infty
 \le
 \frac{\norm{\PhiV_h}_\infty}{m_\Phi}
 c_-^n\norm{\U^0}_\infty.
\]
\end{proof}

\begin{proof}[Proof of Proposition~\ref{prop:positive-threshold}]
Part (i) follows from Theorem~\ref{thm:threshold-equivalence}.  If $\tau=0$, write
\[
 \mathcal F_h(\U)
 =
 L_{\delta,h}\U
 -
 \binom{
 \big(b_i^hU_{1,i}^2\big)_{i=1}^{N_h}}
 {
 \big(f_i^hU_{2,i}^2\big)_{i=1}^{N_h}}.
\]
Let $\U(t)$ and $\widetilde{\U}(t)$ be solutions with $0\le\U(0)\le\widetilde{\U}(0)$ and $\U(0)\ne\widetilde{\U}(0)$, and set $Z=\widetilde{\U}-\U$.  Then
\[
 \begin{aligned}
 Z_1'
 &=
 \left[
 \mu_1D_{\delta,h}-H_h
 -\operatorname{diag}\!\left(
 b_i^h(\widetilde U_{1,i}+U_{1,i})
 \right)_{i=1}^{N_h}
 \right]Z_1
 +R_hZ_2,\\
 Z_2'
 &=
 \left[
 \mu_2D_{\delta,h}-E_h
 -\operatorname{diag}\!\left(
 f_i^h(\widetilde U_{2,i}+U_{2,i})
 \right)_{i=1}^{N_h}
 \right]Z_2
 +S_hZ_1.
 \end{aligned}
\]
The strong-positivity argument of Corollary~\ref{cor:strong-positive-linear} therefore gives $\widetilde{\U}(t)-\U(t)\gg0$ for $t>0$.

Set
\[
 M(t)
 =
 \max\{\norm{U_1(t)}_\infty,\norm{U_2(t)}_\infty\},
 \qquad
 \beta_h=\max_i\{r_i^h,s_i^h\},
 \qquad
 \kappa_h=\min_i\{b_i^h,f_i^h\}>0.
\]
At a component attaining $M(t)$, the discrete dispersal term is
nonpositive, and hence
\[
 D^+M(t)
 \le
 \beta_hM(t)-\kappa_hM(t)^2.
\]
Consequently,
\[
 M(t)
 \le
 \max\left\{
 M(0),\frac{\beta_h}{\kappa_h}
 \right\}
 \qquad(t\ge0).
\]

Moreover, for $0<\theta<1$ and $\U\gg0$,
\[
 \mathcal F_h(\theta\U)-\theta\mathcal F_h(\U)
 =
 \theta(1-\theta)
 \binom{
 \big(b_i^hU_{1,i}^2\big)_{i=1}^{N_h}}
 {
 \big(f_i^hU_{2,i}^2\big)_{i=1}^{N_h}}
 \gg0.
\]
Thus the semiflow is strongly monotone, bounded for
nonnegative initial data, and strongly sublinear.  If
$\lambda_{\delta,h}>0$, write
$\PhiV_h=(\Phi_{1,h},\Phi_{2,h})^T\gg0$ for the principal eigenvector in
\eqref{eq:discrete-principal-pair}.  Then
\[
 \mathcal F_h(\varepsilon\PhiV_h)
 =
 \varepsilon\lambda_{\delta,h}\PhiV_h
 -\varepsilon^2
 \binom{
  \big(b_i^h(\Phi_{1,h})_i^2\big)_{i=1}^{N_h}}
  {\big(f_i^h(\Phi_{2,h})_i^2\big)_{i=1}^{N_h}}
 \gg0
\]
for all sufficiently small $\varepsilon>0$.  On the other hand, since
$\min_i\{b_i^h,f_i^h\}>0$ and $D_{\delta,h}\one=0$, there exists $K>0$
sufficiently large such that
\[
\mathcal F_h\!\left(K\binom{\one}{\one}\right)\ll0.
\]
These estimates, together with strong monotonicity, boundedness, and strong
sublinearity, give the hypotheses of the standard finite-dimensional
convergence theorem for monotone strongly sublinear semiflows; see
\cite{Smith1995}.
It follows that there is a unique equilibrium $\U_h^*\gg0$ satisfying
$\mathcal F_h(\U_h^*)=0$, and
\[
 \lim_{t\to\infty}\U(t;\U^0)=\U_h^*
 \qquad
 \text{for every }
 \U^0\in\R_+^{2N_h}\setminus\{0\}.
\]
Let $\Psi_{\Delta t}$ denote the numerical update map and $J_*$ the Jacobian of the semidiscrete vector field at $\U_h^*$.  Under the assumption in (iii), every eigenvalue of $J_*$ has negative real part.  Consistency of the semi-implicit step gives
\[
 D\Psi_{\Delta t}(\U_h^*)
 =
 I+\Delta tJ_*+O(\Delta t^2),
\]
so there exists $\Delta t_0>0$ such that all eigenvalues of $D\Psi_{\Delta t}(\U_h^*)$ lie inside the unit disk whenever $0<\Delta t<\Delta t_0$.  Hence $\U_h^*$ is a locally asymptotically stable fixed point of \eqref{eq:scheme} for every such $\Delta t$.
\end{proof}

\begin{proof}[Proof of Lemma~\ref{lem:residual-cont}]
Set $\eta=\eta(\PhiV,\zeta)$.  By definition,
\[
 (\zeta-\eta)\PhiV
 \le
 \cL_\delta\PhiV
 \le
 (\zeta+\eta)\PhiV.
\]
Since $\PhiV\in X_{++}\times X_{++}$, the Collatz--Wielandt characterization \eqref{eq:CW-cont} immediately yields
\[
 \zeta-\eta
 \le\lambda_\delta
 \le\zeta+\eta.
\]
\end{proof}

\begin{proof}[Proof of Corollary~\ref{cor:sign-interval}]
If $\lambda_h^+<0$, Lemma~\ref{lem:residual-cont} gives $\lambda_\delta<0$.  If $\lambda_h^->0$, it gives $\lambda_\delta>0$.
\end{proof}

\subsection{Reflected Cartesian local-limit results}

\begin{proof}[Proof of Theorem~\ref{thm:continuous-spectral-ac}]
The reflected operator $\cL_\delta^{\Box}$ generates a positive uniformly continuous semigroup on $C(\overline{\Omega_{\Box}})^2$.  Use $\PhiV^{0,\Box}$ as a positive test function.  Since the reaction matrix is unchanged,
\[
 \cL_\delta^{\Box}\PhiV^{0,\Box}
 -\lambda_0^{\Box}\PhiV^{0,\Box}
 =
 \begin{pmatrix}
 \mu_1(\cD_\delta^{\Box}\Phi_1^{0,\Box}
       -\Delta\Phi_1^{0,\Box})\\
 \mu_2(\cD_\delta^{\Box}\Phi_2^{0,\Box}
       -\Delta\Phi_2^{0,\Box})
 \end{pmatrix}.
\]
Lemma~\ref{lem:cont-local-consistency} bounds the numerator in the relative residual by $C\delta^2$.  Division by $m_\Phi$ gives
\[
 (\lambda_0^{\Box}-C\delta^2)\PhiV^{0,\Box}
 \le
 \cL_\delta^{\Box}\PhiV^{0,\Box}
 \le
 (\lambda_0^{\Box}+C\delta^2)\PhiV^{0,\Box}.
\]
Let $T_\delta^{\Box}(t)$ be the positive uniformly continuous semigroup generated by $\cL_\delta^{\Box}$.  The preceding inequalities imply
\[
 e^{(\lambda_0^{\Box}-C\delta^2)t}\PhiV^{0,\Box}
 \le
 T_\delta^{\Box}(t)\PhiV^{0,\Box}
 \le
 e^{(\lambda_0^{\Box}+C\delta^2)t}\PhiV^{0,\Box}.
\]
Since $\min_{x\in\overline{\Omega_\Box},\,i}\Phi_i^{0,\Box}(x)=m_\Phi>0$, every $\Psi\in X\times X$ satisfies
\[
 |\Psi|
 \le
 \frac{\norm{\Psi}_\infty}{m_\Phi}\PhiV^{0,\Box}.
\]
Set $a=e^{(\lambda_0^{\Box}-C\delta^2)t}$ and $b=e^{(\lambda_0^{\Box}+C\delta^2)t}$.  By positivity of $T_\delta^{\Box}(t)$, for every $n\ge1$,
\[
 a^n\PhiV^{0,\Box}
 \le
 T_\delta^{\Box}(t)^n\PhiV^{0,\Box}
 \le
 b^n\PhiV^{0,\Box}.
\]
Hence $\norm{T_\delta^{\Box}(t)^n}\ge a^n$.  Moreover, if $\norm{\Psi}_\infty\le1$, then
\[
 \begin{aligned}
 |T_\delta^{\Box}(t)^n\Psi|
 &\le
 T_\delta^{\Box}(t)^n|\Psi|\le
 \frac{1}{m_\Phi}
 T_\delta^{\Box}(t)^n\PhiV^{0,\Box}\le
 \frac{b^n}{m_\Phi}\PhiV^{0,\Box}.
 \end{aligned}
\]
Therefore
\[
 \norm{T_\delta^{\Box}(t)^n}
 \le
 \frac{\norm{\PhiV^{0,\Box}}_\infty}{m_\Phi}b^n.
\]
Taking $n$th roots and using the spectral-radius formula gives $a\le\rho(T_\delta^{\Box}(t))\le b$.  Since spectral mapping gives $\rho(T_\delta^{\Box}(t))=e^{t\lambda_\delta^{\Box}}$, we obtain
\[
 e^{t(\lambda_0^{\Box}-C\delta^2)}
 \le
 e^{t\lambda_\delta^{\Box}}
 \le
 e^{t(\lambda_0^{\Box}+C\delta^2)},
\]
which is \eqref{eq:continuous-spectral-ac}.
\end{proof}

\begin{proof}[Proof of Theorem~\ref{thm:discrete-spectral-ac}]
Set $\PhiV_h^{0,\Box}=\cR_h\PhiV^{0,\Box}$.  By Assumption~\ref{ass:eigenfunction}, $\PhiV_h^{0,\Box}\gg0$ and $\min_{i,\jj}(\PhiV_h^{0,\Box})_{i,\jj}\ge m_\Phi$.
Theorem~\ref{thm:discrete-local-consistency} gives
\[
 \left\|
 L_{\delta,h}^{\Box}\PhiV_h^{0,\Box}
 -\lambda_0^{\Box}\PhiV_h^{0,\Box}
 \right\|_\infty
 \le C(\delta^2+h^2).
\]
After division by the positive lower bound $m_\Phi$, this implies
\[
 (\lambda_0^{\Box}-C(\delta^2+h^2))\PhiV_h^{0,\Box}
 \le
 L_{\delta,h}^{\Box}\PhiV_h^{0,\Box}
 \le
 (\lambda_0^{\Box}+C(\delta^2+h^2))\PhiV_h^{0,\Box},
\]
with a possibly larger constant $C$.  The reflected discrete linear
evolution preserves nonnegativity; when the nonlocal formula is used,
positivity propagates through the positive coordinate-neighbor weights
in \eqref{eq:J-positive-ball}, while the local formula has the same
discrete maximum-principle property.  Hence, with
$T_{\delta,h}^{\Box}(t)=e^{tL_{\delta,h}^{\Box}}$,
\[
 e^{(\lambda_0^{\Box}-C(\delta^2+h^2))t}\PhiV_h^{0,\Box}
 \le
 T_{\delta,h}^{\Box}(t)\PhiV_h^{0,\Box}
 \le
 e^{(\lambda_0^{\Box}+C(\delta^2+h^2))t}\PhiV_h^{0,\Box}.
\]
Since $\min_{i,\jj}(\PhiV_h^{0,\Box})_{i,\jj}\ge m_\Phi>0$, every $X\in\R^{2N_h^{\Box}}$ satisfies
\[
 |X|
 \le
 \frac{\norm{X}_\infty}{m_\Phi}\PhiV_h^{0,\Box}.
\]
Set $a_h=e^{(\lambda_0^{\Box}-C(\delta^2+h^2))t}$ and $b_h=e^{(\lambda_0^{\Box}+C(\delta^2+h^2))t}$.  By positivity of $T_{\delta,h}^{\Box}(t)$, for every $n\ge1$,
\[
 a_h^n\PhiV_h^{0,\Box}
 \le
 T_{\delta,h}^{\Box}(t)^n\PhiV_h^{0,\Box}
 \le
 b_h^n\PhiV_h^{0,\Box}.
\]
Hence $\norm{T_{\delta,h}^{\Box}(t)^n}_\infty\ge a_h^n$.  If $\norm{X}_\infty\le1$, then
\[
 \begin{aligned}
 |T_{\delta,h}^{\Box}(t)^nX|
 &\le
 T_{\delta,h}^{\Box}(t)^n|X|\le
 \frac{1}{m_\Phi}
 T_{\delta,h}^{\Box}(t)^n\PhiV_h^{0,\Box}\le
 \frac{b_h^n}{m_\Phi}\PhiV_h^{0,\Box},
 \end{aligned}
\]
so
\[
 \norm{T_{\delta,h}^{\Box}(t)^n}_\infty
 \le
 \frac{\norm{\PhiV_h^{0,\Box}}_\infty}{m_\Phi}b_h^n.
\]
Taking $n$th roots gives $a_h\le\rho(T_{\delta,h}^{\Box}(t))\le b_h$.  The finite-dimensional spectral mapping theorem gives $\rho(T_{\delta,h}^{\Box}(t))=e^{t\lambda_{\delta,h}^{\Box}}$, and hence
\[
 |\lambda_{\delta,h}^{\Box}-\lambda_0^{\Box}|
 \le C(\delta^2+h^2),
\]
which is \eqref{eq:spectral-ac}.
\end{proof}

\begin{proof}[Proof of Corollary~\ref{cor:nonlocal-threshold-approx}]
The triangle inequality and Theorems~\ref{thm:continuous-spectral-ac} and \ref{thm:discrete-spectral-ac} give
\[
 |\lambda_{\delta,h}^{\Box}-\lambda_\delta^{\Box}|
 \le
 |\lambda_{\delta,h}^{\Box}-\lambda_0^{\Box}|
 +
 |\lambda_\delta^{\Box}-\lambda_0^{\Box}|
 \le
 C(h^2+\delta^2).
\]
\end{proof}

\begin{proof}[Proof of Corollary~\ref{cor:eventual-sign}]
Choose $h$ and $\delta$ so that $C(h^2+\delta^2)<\frac12|\lambda_0^{\Box}|$.  The conclusion follows from Theorem~\ref{thm:discrete-spectral-ac}.
\end{proof}

\begin{lemma}[Local consistency error for the reflected scheme]
\label{lem:local-residual}
Let $V^n=\cR_h\uvec^{0,\Box}(t_n)$ and define the consistency residual
$r^{n+1}$ by
\[
 \big[I-\Delta t\big(A_{\delta,h}^{\Box}-Q_h^{\Box}(V^n)\big)\big]V^{n+1}
 -
 \big(I+\Delta t B_h^{\Box}\big)V^n
 =
 \Delta t\,r^{n+1}.
\]
Under Assumption~\ref{ass:solution-reg},
\[
 \norm{r^{n+1}}_\infty
 \le
 C_T(\Delta t+h^2+\delta^2)
\]
for $t_{n+1}\le T$, with a constant independent of $h/\delta$.
\end{lemma}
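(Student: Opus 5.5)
Expand the residual by inserting the exact PDE at the grid nodes. Dividing the defining identity by $\Delta t$ gives
\[
 r^{n+1}
 =\frac{V^{n+1}-V^n}{\Delta t}
 -A_{\delta,h}^{\Box}V^{n+1}
 +Q_h^{\Box}(V^n)V^{n+1}
 -B_h^{\Box}V^n .
\]
The local Neumann system, evaluated at $x_{\jj}$ and $t_{n+1}$, reads
\[
 \cR_h\partial_t\uvec^{0,\Box}(t_{n+1})
 =\cR_h\big(\cL_0^{\Box}\uvec^{0,\Box}\big)(t_{n+1})
 -\cR_h\big(\text{quadratic terms}\big)(t_{n+1}).
\]
Subtracting this identity, I will split $r^{n+1}$ into four pieces:
\begin{itemize}
\item[(a)] the time-difference error $\frac{V^{n+1}-V^n}{\Delta t}-\cR_h\partial_t\uvec^{0,\Box}(t_{n+1})$;
\item[(b)] the dispersal error $\mu_i\big(D_{\delta,h}^{\Box}\cR_hu_i^{0,\Box}(t_{n+1})-\cR_h\Delta u_i^{0,\Box}(t_{n+1})\big)$;
\item[(c)] the lagged linear coupling $B_h^{\Box}(V^{n+1}-V^n)$;
\item[(d)] the lagged quadratic error $\big(Q_h^{\Box}(V^n)-Q_h^{\Box}(V^{n+1})\big)V^{n+1}$.
\end{itemize}
Since coefficients are evaluated at cell centers, $\cR_h(qu)=\mathrm{diag}(\cR_hq)\,\cR_hu$ exactly. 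Hence the reaction terms $H,e,r,s$ and the quadratic terms at time $t_{n+1}$ cancel with no spatial error, and the only spatial error is in (b).

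Term (a) is handled by Taylor's theorem with integral remainder in time. Because $u_i^{0,\Box}\in C^2([0,T];C(\overline{\Omega_\Box}))$, it is bounded by $\frac{\Delta t}{2}\sup_t\norm{\partial_t^2u_i^{0,\Box}}_\infty$. Terms (c) and (d) are bounded by $C\Delta t$ via the mean value theorem in time: $\norm{V^{n+1}-V^n}_\infty\le\Delta t\sup\norm{\partial_tu^{0,\Box}}_\infty$. Then $Q_h^{\Box}$ is linear in its argument with coefficients bounded by $\norm{b}_\infty,\norm{f}_\infty,\tau\norm{c}_\infty,\tau\norm{g}_\infty$. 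Finally, $\norm{V^{n+1}}_\infty$ is bounded by the uniform bound on the solution in Assumption~\ref{ass:solution-reg}.

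Term (b) is the main point, and I will handle it with Theorem~\ref{thm:discrete-local-consistency} applied to $v=u_i^{0,\Box}(t_{n+1},\cdot)$ for each fixed time. Its hypotheses, $v\in C^4(\overline{\Omega_\Box})$ with $E_Nv\in C^4(\R^d)$, hold by Assumption~\ref{ass:solution-reg}. The theorem gives the bound $C_v(\delta^2+h^2)$, with $C_v$ independent of $h/\delta$. Inspecting the proof of that theorem, $C_v$ is an explicit multiple, depending only on $d$, of $\norm{|D^4E_Nv|}_{L^\infty(\R^d)}$ and $\max_\ell\norm{\partial_\ell^4E_Nv}_\infty$. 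These norms are controlled uniformly in $t\in[0,T]$ by the uniform $C^4$ extension hypothesis. The reflected extension's fourth derivatives are the reflected fourth derivatives of $u$, so their sup equals that over the box.

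Hence the constant in (b) depends only on $T$ and on the solution, not on $n$, $h$, $\delta$, or $h/\delta$. Both regimes of the definition of $D_{\delta,h}^{\Box}$ (the local formula and the nonlocal one) are covered by that theorem. Summing the four pieces gives $\norm{r^{n+1}}_\infty\le C_T(\Delta t+h^2+\delta^2)$. The only delicate step is tracking that uniformity in $t$ in (b); everything else is routine Taylor estimation.
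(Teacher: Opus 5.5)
Your proposal is correct and follows essentially the same route as the paper. Both split $r^{n+1}$ into four pieces. The backward-difference error is $O(\Delta t)$. The dispersal error is $O(h^2+\delta^2)$ by Theorem~\ref{thm:discrete-local-consistency}. The lagged interstage term and the lagged quadratic term are each $O(\Delta t)$. You go further than the paper in making explicit that the cell-center reaction terms cancel exactly and that the consistency constant is uniform in $t$.
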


\begin{proof}
After division by $\Delta t$, the backward time difference satisfies
\[
 \left\|
 \frac{V^{n+1}-V^n}{\Delta t}
 -
 \cR_h\partial_t\uvec^{0,\Box}(t_{n+1})
 \right\|_\infty
 \le C_T\Delta t.
\]
Theorem~\ref{thm:discrete-local-consistency} gives an
$O(h^2+\delta^2)$ bound for the dispersal error.  The explicit interstage
term contributes
$B_h^{\Box}(V^n-V^{n+1})=O(\Delta t)$, and the temporal regularity of the
local solution together with the Lipschitz continuity of $Q_h^{\Box}$ on
the relevant bounded set gives
\[
 \big(Q_h^{\Box}(V^{n+1})-Q_h^{\Box}(V^n)\big)V^{n+1}
 =
 O(\Delta t).
\]
Combining these estimates proves the stated bound.
\end{proof}

\begin{proof}[Proof of Theorem~\ref{thm:nonlinear-ac}]
Set $e^n=\U_{\Box}^n-\cR_h\uvec^{0,\Box}(t_n)$, $E_n=\norm{e^n}_\infty$, and $\varepsilon=\Delta t+h^2+\delta^2$.  Since $\U_{\Box}^0=\cR_h\uvec^{0,\Box}(0)$, we have $E_0=0$.  Write
$V^n=\cR_h\uvec^{0,\Box}(t_n)$ and set
\[
 C_n
 =I-\Delta t\big(A_{\delta,h}^{\Box}-Q_h^{\Box}(\U_{\Box}^n)\big).
\]
Subtracting the consistency identity in Lemma~\ref{lem:local-residual} from
the numerical step gives the exact error equation
\[
 \begin{aligned}
 C_n e^{n+1}
 &=
 (I+\Delta t B_h^{\Box})e^n \quad
 +\Delta t\big(Q_h^{\Box}(V^n)-Q_h^{\Box}(\U_{\Box}^n)\big)V^{n+1}
 -\Delta t\,r^{n+1}.
 \end{aligned}
\]
Because $Q_h^{\Box}(\U_{\Box}^n)$ is a nonnegative diagonal matrix,
$C_n$ has the same blockwise maximum-principle structure as the matrices in
Lemma~\ref{lem:discrete-maximum}.  In particular, $C_n^{-1}$ preserves
nonnegativity.  Moreover $C_n\one\ge\one$, and hence the comparison argument
used in Lemma~\ref{lem:finite-bound} gives
\[
 \norm{C_n^{-1}}_\infty\le1.
\]
The coefficient matrices in $B_h^{\Box}$ are uniformly bounded by the
continuous coefficients.  Since $Q_h^{\Box}$ is linear in its argument,
there is a constant $L_Q$, independent of $h$, $\delta$, and $\Delta t$, such
that
\[
 \norm{Q_h^{\Box}(V^n)-Q_h^{\Box}(\U_{\Box}^n)}_\infty
 \le L_Q E_n.
\]
Assumption~\ref{ass:solution-reg} gives
$\sup_{0\le t\le T}\norm{\uvec^{0,\Box}(t)}_\infty<\infty$, while
Lemma~\ref{lem:local-residual} gives
$\norm{r^{n+1}}_\infty\le C_T\varepsilon$.  Applying $C_n^{-1}$ to the
error equation therefore yields
\[
 E_{n+1}
 \le
 (1+C\Delta t)E_n
 +C\Delta t\,\varepsilon,
\]
where $C$ is independent of $h/\delta$.  Iteration gives
\[
 E_n
 \le
 C\Delta t\,\varepsilon
 \sum_{k=0}^{n-1}(1+C\Delta t)^k
 =
 \varepsilon\big[(1+C\Delta t)^n-1\big].
\]
Since $1+C\Delta t\le e^{C\Delta t}$ and $n\Delta t\le T$,
\[
 E_n
 \le
 (e^{CT}-1)\varepsilon
 \le
 C_T(\Delta t+h^2+\delta^2).
\]
Taking the maximum over $0\le n\Delta t\le T$ proves
\eqref{eq:nonlinear-error}.
\end{proof}

\section{Discussion and Concluding Remarks}
\label{sec:discussion-conclusion}
On general smooth habitats, the habitat-conforming discretization preserves
discrete mass, dissipation, nonnegativity, semidiscrete equilibria, and the
persistence--extinction threshold for each fixed interaction scale.  On
rectangular habitats, the reflected discretization additionally gives
second-order convergence of the discrete threshold in $h$ and $\delta$ and
finite-time solution error of order $\Delta t+h^2+\delta^2$, with constants
independent of $h/\delta$.  Thus the local-limit estimates require no prescribed
coupling between the mesh size and the interaction scale.

The reflection condition \eqref{eq:reflection-compatibility} makes explicit
the fourth-order boundary compatibility used in the local-limit estimates.
The numerical experiments confirm the conservation, threshold, nonnegativity,
and local-limit behavior established by the analysis.

\end{document}